\newcommand{\msf}[1]{\mathsf {#1}}
\newcommand{\mcal}[1]{{\mathcal {#1}}} 
\newtheorem{theorem}{Theorem}  [section]
\newtheorem{lemma}[theorem]{Lemma}
\newtheorem{corollary}[theorem]{Corollary}
\newtheorem{proposition}[theorem]{Proposition}
\newtheorem{hypothesis}[theorem]{Hypothesis}
{\theorembodyfont{\rmfamily}  	
                                \newtheorem{remark}[theorem]{Remark}
                                
				\newtheorem{question}[theorem]{Question}

                                \newtheorem{definition}[theorem]{Definition}
                                
}
{\theoremstyle{break}	
{\theorembodyfont{\rmfamily} 
				\newtheorem{example}[theorem]{Example}
				
}}
\newenvironment{proof}[1]{\smallskip \noindent {\bf #1}}{\qed\smallskip}
\def\qed{\ifhmode\unskip\nobreak\fi\ifmmode\ifinner\else\hskip5pt\fi\fi
 \hfill\hbox{\hskip5pt\vrule width4pt height6pt depth1.5pt\hskip1pt}}
\newcommand{\ov}[1]{\ensuremath{\overline{#1}}}
\newcommand{\Int}{\ensuremath{\operatorname{\mathsf{Int}}}}
\newcommand{\RR}{\ensuremath{{\mathbb R}}}     
\newcommand{\R}[1]{\ensuremath{{\mathbb R}^{#1}}} 
\newcommand{\Rp}{\ensuremath{{\mathbb R}_+}}   
\newcommand{\ZZ}{\ensuremath{{\mathbb Z}}}	   
\newcommand{\Np}{\ensuremath{{\mathbb N}_{+}}}  
\renewcommand{\S}[1]{\ensuremath{{\bf S}^{#1}}} 
\newcommand{\D}[1]{\ensuremath{{\bf D}^{#1}}} 
\newcommand{\om}[1]{\ensuremath{\omega(#1)}}
\newcommand {\pd}[1] {\ensuremath{\frac{\partial}{\partial #1}}}
\newcommand{\pde}[2]
	{\ensuremath{\frac{\partial #1}{\partial #2}}}	
\newcommand{\ode}[2]{\ensuremath{\frac{d#1}{d#2}}}    	
\def\d#1dt{\frac{d#1}{dt}}    
\newcommand{\eps}{\ensuremath{\epsilon}}
\newcommand{\lam}{\ensuremath{\lambda}}
\newcommand{\Lam}{\ensuremath{\Lambda}}
\newcommand{\gam}{\ensuremath{\gamma}}
\newcommand{\Fix}[1]{\ensuremath{\operatorname{\mathsf {Fix}}(#1)}}
\newcommand{\co}{\colon\thinspace} 
\def\empty{\varnothing}
\renewcommand{\gg}{\ensuremath{\mathfrak g}}
\newcommand{\mm}{\ensuremath{\mathfrak m}}
\newcommand{\nn}{\ensuremath{\mathfrak n}}
\renewcommand{\ss}{\ensuremath{\mathfrak {s}}}
\newcommand{\ttt}{\ensuremath{\mathfrak t}}
\newcommand{\p}{\ensuremath{\partial}}
\begin{document}      
 
\def\empty{\varnothing}
\newcommand{\V}{\mcal V}
\renewcommand{\om}{\omega}
\reversemarginpar 	
 \pagestyle{plain}

      \def\mylabel#1{\label{#1}} 
	
\newcommand{\Z}[1]{\ensuremath{{\msf  Z ( #1)}}}

\renewcommand{\ss}  {\ensuremath{{\mathfrak {s}}}}
	
\renewcommand{\dim} {\ensuremath{{\mathsf {dim}\,}}}

\newcommand{\bd}[1]{\ensuremath{{\msf  {bd}  (#1)}}}

 \title{\bf Common zeros of inward  vector fields on
    surfaces}

\author{\Large  Morris W. Hirsch\thanks
{This research was supported in part by National Science Foundation
  grant DMS-9802182.}\\
 Mathematics Department\\
University of Wisconsin at Madison\\
University of California at Berkeley}
\maketitle
%




\begin{abstract} 
A vector field $X$ on a manifold  $M$ with possibly nonempty boundary is
{\em inward} if it generates a unique local semiflow $\Phi^X$.    A compact
relatively open set $K$ in the zero set $\Z X$ is a {\em  block}.  
The Poincar\'e-Hopf index is extended to blocks $K\subset \Z X$ where
$X$ is inward and $K$ 
may meet $\p M$.  A block with nonzero index is {\em
  essential}.  

Let $X, Y$ be inward $C^1$ vector fields on surface $M$ such that
 $[X, Y]\wedge X=0$ and let $K$ be an essential block of zeros for $X$.
Among the main results are that $Y$ has a zero in $K$ if $X$ and $Y$
are analytic, or $Y$ is $C^2$ and $\Phi^Y$ preserves area. 
Applications are made to actions of Lie algebras and groups.
\end{abstract}

\tableofcontents
\section{Introduction}   \mylabel{sec:intro}
Let $M$ be an $n$-dimensional manifold with boundary $\p M$ and $X$ a
vector field on $M$, whose value at $p$ is denoted by $X_p$.  The {\em
  zero set} of $X$ is
\[
 \Z X:= \{p\in M\co X_p=0\}.
\]
The set of common zeros for a  set $\ss$  of vector fields is
\[\textstyle
  \Z \ss :=\bigcap_{X\in \ss}\Z X. 
\] 

A {\em block} for $X$ is a compact, relatively open subset $K\subset
\Z X$.  This means $K$ lies in a precompact open set $U\subset M$
whose topological boundary $\msf{bd}(U)$ contains no zeros of $X$.  We
say that $U$ is {\em isolating} for $(X,K)$, and for $X$ when $K:=\Z X
\cap U$.   When $M$ is compact, $\Z
X$ is a block for $X$ with $M$ as an isolating neighborhood.

\begin{definition}              \mylabel{th:defph}
If $p\in M\verb=\=\p M$ is an isolated zero of $X$, the {\em
 index of $X$ at $p$}, denoted by $\msf i_p X$,  is  the degree of the map of
the unit $(n-1)$-sphere 
\[ 
  \S{n-1}\to\S {n-1}, \quad z\mapsto \frac{\hat X(z)}{\|\hat X (z)\|},
\]
where $\hat X$ is the representative of $X$ in an arbitrary coordinate
system centered at $p$.  When $U$ is isolating for $(X, K)$ and
disjoint from $\Z X \cap \p M$, the {\em Poincar\'e-Hopf index} of $X$
at $K$ is
\[
  \msf i^{\rm PH}_K (X) :=\sum_p\msf i_pY, \quad (p\in \Z Y \cap U)
\]
where $Y$ is any vector field on $M$ such that $\Z Y \cap \ov U$ is
finite, and there is a homotopy of vector fields 
$\left\{X^t\right\}_{0\le t\le 1}$ from $X^0=X$ to $X^1=Y$ such that 
 $\bigcup_{t}]\Z {X^t} \cap U$ is compact.\footnote{
The Poincar\'e-Hopf index goes back to  Poincar\'e
\cite{Poincare85} and  Hopf \cite{Hopf25}.  It is usually defined only when
$M$ is compact and $K=\Z X$.  The more general definition above is adapted
from Bonatti \cite{Bonatti92}.}

The block $K$  is {\em essential} for $X$  if\, $\msf i^{\rm  PH}_K (X)\ne 0$.
\end{definition}

Christian Bonatti \cite{Bonatti92} proved the following remarkable result:
\begin{theorem}[{\sc  Bonatti}]        \mylabel{th:bonatti}
Assume $\dim M\le 4$ and $\p M=\empty$.  If $X,Y$ are analytic vector
fields on $M$ such that $[X, Y]= 0$, then $\Z Y$ meets every essential
block of zeros for $X$.\footnote{
Bonatti assumes $\dim M = 3$ or $4$, but the conclusion for $\dim M
\le 2$ follows easily:  If $\dim M=2$, identify $M$ with
$M\times\{0\}\subset M\times \RR$ and apply Bonatti's theorem to the
vector fields $\left(X, x\pde{~} {x}\right)$ and $\left(Y, x\pde{~} {x}\right)$ on $M\times
\RR$.   For $\dim M=1$ there is a simple direct proof.} 
\end{theorem}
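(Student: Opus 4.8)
The plan is to argue by contradiction. Suppose $\Z Y \cap K = \empty$; I will deduce that $\msf i^{\rm PH}_K (X) = 0$, so that $K$ is not essential. Since $K$ is compact and $Y$ is continuous and nowhere zero on $K$, I first choose a precompact isolating neighborhood $U$ for $(X,K)$ small enough that $Y$ is nonvanishing on the closure $\ov U$; then $\Z X \cap U = K$, while $\bd U$ contains no zero of $X$ and no zero of $Y$.

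Next I exploit the commutation. Because $[X,Y]=0$ the local flows commute, $\Phi^X_s\circ\Phi^Y_t=\Phi^Y_t\circ\Phi^X_s$, so each $\Phi^Y_t$ carries $\Z X$ into itself; as $K$ is compact and relatively open in $\Z X$, we get $\Phi^Y_t(K)=K$ for all small $|t|$. Hence $Y$ is tangent to $\Z X$ along $K$ and, being nonzero there, generates a fixed-point-free local flow on the compact set $K$. (This already forces $K$ to contain no isolated point: at an isolated zero $p$ the orbit $\Phi^Y_t(p)$ would have to stay in $\{p\}$ and yet move, which is impossible.)

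The heart of the matter is to convert this nonvanishing symmetry into vanishing of the index. By the very definition of $\msf i^{\rm PH}_K (X)$ — which permits \emph{any} reference field together with a homotopy whose union of zeros $\bigcup_t \Z{X^t}\cap U$ is compact — it suffices to connect $X$ to our field $Y$ by such a homotopy: since $\Z Y\cap U=\empty$, the defining sum is empty and the index is $0$. For the straight-line homotopy $X^t=(1-t)X+tY$ the compactness condition holds precisely when no $X^t$ vanishes on $\bd U$, i.e.\ when $X$ and $Y$ are nowhere antiparallel along $\bd U$. Thus everything reduces to choosing the isolating block so that its boundary avoids the antiparallel locus $A=\{p: X_p,Y_p \text{ point in opposite directions}\}$, a closed set disjoint from $\Z X$ and hence from $K$. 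When $A$ stays away from $K$ any small $U$ works; the difficulty is that, $X$ being small near $K$ while $Y$ is bounded away from $0$, points of $A$ may accumulate on $K$. To defeat this I would build $\bd U$ out of the flow of $Y$: using that $\Phi^Y$ is free and tangent to $\Z X$, and invoking analyticity to stratify $\Z X$ near $K$ into finitely many smooth manifolds to each of which $Y$ is tangent, one constructs a tubular block whose boundary is transverse to $Y$ and along which the transverse component of $X$ — governed by the linearization of $X$ off the strata — is never opposite to $Y$.

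The main obstacle is exactly this construction, and it is where the hypothesis $\dim M\le 4$ is indispensable: controlling the singularities of the analytic variety $\Z X$ and producing a flow-adapted block with the required non-antiparallelism is feasible only because the links of the strata have dimension $\le 3$. This is the genuinely hard, ``remarkable'' content of the theorem, and the step I expect to resist any soft argument. Finally, the low-dimensional cases need nothing new: for $\dim M\le 2$ one passes, as in the footnote, to $M\times\RR$ with the commuting fields $(X,\,x\,\p/\p x)$ and $(Y,\,x\,\p/\p x)$; since the index of $x\,\p/\p x$ at $0$ is $1$, the block $K\times\{0\}$ stays essential, reducing the assertion to the already-treated cases $\dim M=3,4$.
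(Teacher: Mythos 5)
The paper does not actually prove this theorem: it is quoted from Bonatti's paper \cite{Bonatti92}, and the only argument supplied is the footnote reducing $\dim M\le 2$ to the cases $\dim M=3,4$ via the product fields on $M\times\RR$. Your treatment of that reduction matches the footnote and is fine. But for the main content (dimensions $3$ and $4$) your proposal has a genuine gap exactly where you flag one. Reducing the statement to ``find an isolating neighborhood $U$ of $K$ whose boundary avoids the locus where $X$ and $Y$ are antiparallel'' is a reasonable and standard first move (it is essentially the homotopy-invariance argument the paper itself uses in the proof of Theorem \ref{th:main}, via Proposition \ref{th:homsections}(ii)), and your observations about $Y$-invariance of $\Z X$ and the straight-line homotopy are correct. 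The problem is that the construction of such a $U$ is not carried out: you describe it only as ``one constructs a tubular block whose boundary is transverse to $Y$ and along which the transverse component of $X$ \ldots is never opposite to $Y$.'' As you yourself note, near $K$ the field $X$ tends to $0$ while $Y$ is bounded away from $0$, so the antiparallel locus can accumulate on $K$; controlling it is not a routine consequence of stratifying $\Z X$ and taking a tubular neighborhood. That control is the entire substance of Bonatti's paper, which requires a delicate analysis of the analytic set $\Z X$ and of the dynamics of $Y$ near it, and is precisely where the dimension restriction enters. A sentence asserting that the construction ``is feasible because the links of the strata have dimension $\le 3$'' is a statement of the theorem's difficulty, not a proof of it.

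Two smaller points. First, in the definition of $\msf i^{\rm PH}_K(X)$ the required compactness of $\bigcup_t\Z{X^t}\cap U$ does follow from nonvanishing of every $X^t$ on the compact set $\msf{bd}(U)$ (by compactness of $[0,1]\times\msf{bd}(U)$), so that part of your reduction is sound, but you should say it rather than leave it implicit. Second, even granting the reduction, you would still need to rule out the degenerate case where $X$ and $Y$ are everywhere dependent near $K$ (so that no choice of $\msf{bd}(U)$ avoids the antiparallel locus); the paper's Theorem \ref{th:main}, case (D2), shows this situation requires a separate argument using a perturbation $(1-\eps)X+\eps Y$ rather than a homotopy. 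Your sketch does not address it.
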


Our main results, Theorems \ref{th:main} and \ref{th:mainB}, reach
similar conclusions for surfaces $M$ which may have nonsmooth
boundaries, and certain pairs of vector fields that generate local
semiflows on $M$, including cases where the fields are not analytic
and do not not commute. Applications are made to actions of Lie
algebras and Lie groups.

Next we define terms (postponing some details), state the main
theorems and apply them to Lie actions.  After sections on dynamics
and index functions, the main results are proved in Section
\ref{sec:mainproofs}.

\subsection{Terminology}   
$\ZZ$ denotes the integers, $\Np$ the positive integers, $\RR$ the
real numbers, and $\Rp$ the closed half line $[0,\infty)$.
Maps are continuous, and 
manifolds are real, smooth and metrizable, unless otherwise noted.
The set of fixed points of a map $f$ is $\Fix f$.

The following assumptions are always in force:

\begin{hypothesis}              \mylabel{th:hypmain}
$\tilde M$ is an analytic $n$-manifold with empty boundary. 
$M\subset \tilde M$ is a connected topological $n$-manifold.\footnote{
The only role of  $\tilde M$ is to permit a simple definition of
smooth maps on $M$.  Its global topology is never
used, and in any discussion $\tilde M$ can be replaced by any
smaller open neighborhood of $M$.
If $n>2$ then $M$ might not be smoothable, as shown by a construction due
  to Kirby \cite{Kirby12}:  Let $P$ be a nonsmoothable closed
  $4$-manifold  (Freedman
  \cite{Freedman82}).  Let $D\subset P$ be a compact $4$-disk. Then
  $M:=P\setminus \Int D$ is not smoothable, for otherwise $\p M$ would
  be diffeomorphic to $\S 3$ and $P$ could be smoothed by gluing $\D
  4$ to $M$.
  Define $\tilde M$ as the connected sum of $P$ with a nontrivial $\S
  2$-bundle over $\S 2$.  Then $\tilde M$ is smoothable and contains
  $M$ (compare Friedl {\em et al.\ }\cite {Friedl07}).}
\end{hypothesis}
We call $M$ an {\em analytic manifold} when $\p M$ is an analytic
submanifold of $\tilde M$. The tangent vector bundle of $\tilde M$ is 
$T\tilde M$, whose fibre over $p$
is the vector space $T_p\tilde M$.  The restriction of $T\tilde M$ to a
subset $S\subset \tilde M$ is the vector bundle $T_S\tilde M$.  We set
$TM:=T_M\tilde M$.

A map $f$ sending a set $S\subset M$ into a smooth manifold $N$ is
called $C^r$ if it extends to a map $\tilde f$, defined on an open
subset of $\tilde M$, that is $C^r$ in the usual sense.  Here
$r\in\Np\cup \{\infty, \omega\}$, and $C^\omega$ means analytic.  If
$f$ is $C^1$ and $S$ has dense interior in $M$, the tangent map
$T\tilde f\co T\tilde M\to T N$ restricts to a bundle map $Tf\co T_SM
\to TN$ determined by $f$.

A {\em vector field} on $S$ is a section $X\co S\to T_S M$,
whose value at $p$ is denoted by $X_p$.  The set of these vector
fields is a linear space $\V (S)$.  The linear subspaces $\V^r (S)$
and $\V^{\msf{L}}(M)$, comprising $C^r$ and locally Lipschitz fields
respectively, are given the compact-open topology (uniform convergence
on compact sets).

$X$ and $Y$ always denote vector fields on $M$.  When $X$ is $C^r$,
\ $\tilde X$ denotes an extension of $X$ to a $C^r$ vector field on an
open set $W\subset \tilde M$. 

The {\em Lie bracket} of $X, Y \in \V^1 (M)$ is the restriction to $M$
of $[\tilde X, \tilde Y]$. This operation makes $\V^\omega (M)$ and
$\V^\infty (M)$ into Lie algebras.

$X\wedge Y$ denotes the tensor field of exterior 2-vectors $p\mapsto
X_p\wedge Y_p \in \Lam^2(T_pM)$.  Evidently $X\wedge Y=0$ \ iff \ $X_p$ 
and $Y_p$ are linearly dependent at all $p\in M$.
 
\paragraph{Inward vector fields}
A tangent vector to $M$ at $p$ is {\em inward} if it is the tangent at
$p$ to a smooth curve in $M$ through $p$.  The set of inward vectors
at $p$ is $T^{\msf {in}}_pM$. A vector field $X$ is {\em inward} if
$X(M)\subset T^{\msf {in}} (M)$, and there is a unique local semiflow
$\Phi^X:=\left\{\Phi^X_t\right\}_{t\in \Rp}$ on $M$ whose trajectories are the
maximally defined solutions to the initial value problems

\[ \ode  y t = X(y), \quad y(0)=p, \qquad p\in M, \quad t \ge 0.\]

The set of inward vector fields is $\V_{\msf {in}} (M)$.  When $\p M$
is a $C^1$ submanifold of $\tilde M$, it can be shown that $X$ is
inward iff $X (M)\subset T^{\msf {in}} (M)$.

 Define
\[
 \V^r_{\msf {in}} (M):= \V_{\msf {in}} (M)\cap\V^r (M), \qquad 
\V^{\msf{L}}_{\msf {in}}(M):= \V_{\msf {in}} (M)\cap  \V^{\msf{L}}(M).
\]
Proposition \ref{th:convex}
  shows these sets are  convex cones in $\V (M)$.

\paragraph{The vector field index and essential blocks of zeros}
Let $K$ be a block of zeros for $X\in \V_{\msf{in}} (M)$, and
$U\subset M$ an isolating neighborhood for $(X, K)$.   
 The
{\em vector field index}
\[
  \msf i_K (X):=\msf i (X, U)\in \ZZ
\]
is defined in Section \ref{sec:index} as the fixed point index of the
map \ $\Phi^X_t|U\co U\to M$, for any $t >0$ so small that the compact
set $\ov U$ lies in the domain of $\Phi^X_t$.

The block $K$ is
{\em essential} (for $X$) when $\msf i_K (X)\ne 0$.    A version of the
Poincar\'e-Hopf theorem implies  $K$ is essential if it is
an attractor for $\Phi^X$ and has nonzero Euler characteristic $\chi
(K)$.  

\subsection{Statement of results}   
In the next two theorems, besides Hypothesis \ref{th:hypmain} we
assume:

\begin{hypothesis}              \mylabel{th:hyp2}
{~}
\begin{itemize}
\item $M$ and $\tilde M$ are surfaces,

\item  $X$ and $Y$ are $C^1$ inward vector fields on $M$,

\item $K\subset M$ is an essential block of zeros for $X$,

\item  $[X, Y]\wedge X=0$.
\end{itemize}
\end{hypothesis}
The last  condition has the following dynamical significance
(Proposition \ref{th:wedge}):   
\begin{itemize}

\item {\em $\Phi^Y$ permutes
integral curves of $X$.}
\end{itemize}
This implies:
\begin{itemize}

\item {\em if  $q=\Phi^Y_t (p)$ then \
$ X_q=\lam\cdot T\Phi^Y_t(X_p)$ \  for some $ \lam  >0$,}

\item {\em  $\Z X$ is positively invariant under $\Phi^Y$.} (See
  Definition  \ref{th:posinv}.)
\end{itemize}
 
A {\em cycle} for $Y$, or  a $Y$-{\em cycle}, is a 
 periodic orbit of $\Phi^Y$ that is not a fixed point.   
\begin{theorem}         \mylabel{th:main}
Assume {\em Hypothesis \ref{th:hyp2}}.  Each of the following
conditions implies $\Z Y\cap 
K\ne\varnothing$: 
\begin{description}

\item[(a)] $X$ and $Y$ are analytic.

\item[(b)] Every neighborhood of $K$ contains an open neighborhood
  whose boundary is a nonempty union of finitely many  $Y$-cycles. 
 
\end{description}
\end{theorem}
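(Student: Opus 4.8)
The plan is to convert ``$\Z Y\cap K\ne\varnothing$'' into ``$\Phi^Y$ has a fixed point in $K$'' (a common zero is exactly a point of $K$ fixed by all $\Phi^Y_t$), and then to compare the vector-field indices of $X$ and of $Y$ on a common isolating neighborhood. First I set up what both parts share. Since $K$ is compact and relatively open in the closed set $\Z X$, it is clopen in $\Z X$ and has positive distance from $\Z X\setminus K$; so I may choose an isolating neighborhood $U\supset K$ with $\ov U\cap\Z X=K$. Because $\Z X$ is positively invariant under $\Phi^Y$ and $K$ is clopen in it at positive distance from the rest, continuity together with compactness of $K$ yields a $\tau>0$ with $\Phi^Y_t(K)\subset K$ for $0\le t\le\tau$; the semigroup law then makes $K$ positively invariant for all $t\ge0$, and trajectories from $K$ remain in the compact set $K$, hence are defined for all $t\ge0$. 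I will use (from Section~\ref{sec:index}) that for a field nonvanishing on the boundary of a surface region the index equals the degree (total winding number) of the normalized field on that boundary; in particular two fields that are nonvanishing and homotopic through nonvanishing fields on the boundary have equal index.

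For part (b), fix a neighborhood $V$ as in the hypothesis, shrunk so that $\ov V\subset U$ and $\bd{V}$ is a nonempty finite union of $Y$-cycles; then both $X$ and $Y$ are nonvanishing on $\bd{V}$. Along a single $Y$-cycle $\gamma$ of period $T$, parametrized by $t\mapsto\Phi^Y_t(p)$, invariance of $Y$ under its own flow gives $Y_{\Phi^Y_t(p)}=T\Phi^Y_t(Y_p)$, while the permutation property gives $X_{\Phi^Y_t(p)}=\lambda(t)\,T\Phi^Y_t(X_p)$ with $\lambda(t)>0$. Hence $X\wedge Y$ along $\gamma$ equals $\lambda(t)\det(T\Phi^Y_t)\,(X_p\wedge Y_p)$, whose sign is constant because $\Phi^Y_t$, lying in a flow, preserves local orientation; thus $X$ never crosses $Y$ along $\gamma$, and $X,Y$ have equal winding on each boundary cycle. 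Summing over components gives $\msf i(Y,V)=\msf i(X,V)=\msf i_K(X)\ne0$. Being the fixed-point index of $\Phi^Y_t|V$ for small $t$, its nonvanishing forces a zero of $Y$ in $V$ (a nowhere-zero $Y$ on $\ov V$ would make the index $0$). Finally I let $V$ range over the neighborhoods supplied by (b) shrinking to $K$; the resulting zeros of $Y$ accumulate on the compact set $K$, and since $\Z Y$ is closed the limit point lies in $\Z Y\cap K$. On a non-orientable $M$ the sign computation is transferred to the orientation double cover.

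For part (a), suppose $\Z Y\cap K=\varnothing$, so $Y$ is nonvanishing near $K$; since $\Z X$ is $\Phi^Y$-invariant, $Y$ is tangent to the regular part of $\Z X$. As $X$ is analytic and not identically zero, $\Z X$ is near $K$ an analytic set of dimension $\le1$. A $0$-dimensional (isolated) component $\{p\}$ of $K$ is itself $\Phi^Y$-invariant, being a connected component of $\Z X$ met by the connected trajectory through $p$; thus $\Phi^Y_t(p)=p$ and $Y_p=0$, a contradiction. Hence $K$ is purely $1$-dimensional, a compact analytic graph carrying the nonvanishing tangent field $Y$, whose smooth closed components must then be $Y$-cycles; from these one produces neighborhoods of $K$ bounded by $Y$-cycles and invokes case (b), or equivalently derives $\chi=0$ in contradiction with $\msf i_K(X)\ne0$.

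I expect the main obstacle to be the analytic case (a): controlling the singular points of the $1$-dimensional analytic set $\Z X$, where several branches meet and a single tangent vector $Y$ cannot run a consistent nonvanishing flow, and—when the smooth closed components are isolated $Y$-cycles with spiralling neighbours—actually producing the boundary $Y$-cycles needed to reduce to (b). Bridging this requires the local structure theory of analytic vector fields near their zero sets (resolution of the analytic curve $\Z X$ and analysis of the analytic Poincar\'e return maps) together with the index computations of Section~\ref{sec:index}. A secondary and minor point is the orientability bookkeeping in the winding computation of (b), which I handle by passing to the orientation double cover.
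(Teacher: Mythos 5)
Your part (b) is essentially the paper's own argument: on each boundary $Y$-cycle the set where $X$ and $Y$ are dependent is all of the cycle or empty (by invariance of the dependency locus under $\Phi^Y$), in either case $X$ and $Y$ are nonsingularly homotopic on the boundary, so $\msf i(Y,V)=\msf i(X,V)=\msf i_K(X)\ne 0$ and shrinking the neighborhoods produces a point of $\Z Y$ in $K$. That half is sound (the orientation bookkeeping is not really needed, since only the all-or-nothing dependency dichotomy is used, not a global sign of $X\wedge Y$).

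Part (a), however, has a genuine gap, and it is exactly the one you flag at the end without bridging. Your reduction to the case where every component $J$ of $K$ is a circle on which $\Phi^Y$ acts transitively is correct and matches the paper, but neither of your two proposed conclusions works. Producing neighborhoods of $K$ bounded by $Y$-cycles is in general impossible: nearby $Y$-orbits may spiral into $J$, which is precisely what happens in Lima's example (Example \ref{th:exlima}, Remark \ref{th:remlima}), where $K=\p \D2$ is a $Y$-invariant circle, $\msf i_K(X)=1$, and yet $\Z Y\cap K=\varnothing$; so no argument using only the topology of $K$ and the invariance of $\Z X$ can close case (a). The alternative ``derive $\chi=0$'' is also unavailable: $\msf i_K(X)$ is not $\chi(K)$ (indeed $\chi(J)=0$ for every such circle while $\msf i_K(X)\ne 0$). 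The ingredient you are missing is the \emph{dependency set} $\msf D(X,Y)=\{p\co X_p\wedge Y_p=0\}$, which --- because \emph{both} fields are analytic --- is a triangulable analytic set of dimension $\le 1$ ({\L}ojasiewicz), positively $Y$-invariant and containing $K$. This yields the paper's dichotomy: either $J$ lies in the interior of $\msf D(X,Y)$, in which case one chooses an isolating neighborhood whose boundary lies in $\msf D(X,Y)$, where $\hat X=\pm\hat Y$ and the index comparison of (b) applies; or $J$ is a component of $\msf D(X,Y)$, in which case one takes an isolating $U$ with $U\cap\msf D(X,Y)=K$ and applies stability of essential blocks (Corollary \ref{th:ess}) to the inward field $(1-\eps)X+\eps Y$ (inward by Proposition \ref{th:convex}): its zero $p\in U$ is a dependency point, hence lies in $K$, hence $X_p=0$ and therefore $Y_p=0$. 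Your sketch uses only the analyticity of $\Z X$, not of $\msf D(X,Y)$, and so cannot distinguish the analytic case from Lima's smooth counterexample.
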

When  $[X, Y]=0$ this extends Bonatti's Theorem to surfaces with
nonempty boundaries.  The case $[X, Y]= cX, \, c\in\RR$ yields
applications to actions of Lie algebras and Lie groups.

\begin{example}         \mylabel{th:exlima}
In his pioneering paper \cite{Lima64}, E. Lima constructs vector
fields $X, Y$ on the closed disk $\D 2$, tangent to $\p \D 2$ and
generating unique flows, such that $[X, Y]= X$ and $\Z X\cap\Z
Y=\empty$ (see Remark \ref{th:remlima}).  Such fields can be
$C^\infty$ (M. Belliart \& I. Liousse \cite{BL96}, F.-J Turiel
\cite{Turiel03}).  
The unique block of
zeros for $X$ is $\Z X=\p \D 2$, which is essential because $\chi (\D
2)\ne\varnothing$, but $\Z Y$ is a point in the interior of $\D 2$.   
This shows that the conclusion of Theorem \ref{th:main}(a) can fail
when $X$ and $Y$ are not analytic.
The  flows of $X$ and $Y$  generate an effective,
fixed-point free action by a connected,  solvable nonabelian Lie
group.

\end{example}

A local semiflow on a surface $M$ {\em preserves area} if it
preserves a Borel measure on $M$ that is positive and 
finite on nonempty precompact sets.  
\begin{theorem}         \mylabel{th:mainB}
Assume {\em Hypothesis \ref{th:hyp2}}.  If  $\Phi^Y$ preserves area,
each of the following 
conditions implies $\Z Y \cap K\ne\varnothing$:

\begin{description}

\item[(i)] $K$  contains a $Y$-cycle, 

\item[(ii)] $Y$ is $C^2$, 

\item[(iii)] $K$ has a planar neighborhood in $M$. 

\end{description}
\end{theorem}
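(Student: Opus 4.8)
The plan is to argue by contradiction and to funnel all three cases into the cycle--boundary criterion of Theorem~\ref{th:main}(b). Suppose $\Z Y\cap K=\varnothing$, so $Y$ is nowhere zero on the compact set $K$. By Proposition~\ref{th:wedge} the hypothesis $[X,Y]\wedge X=0$ makes $\Z X$ positively invariant under $\Phi^Y$; since $K$ is compact and relatively open in $\Z X$, a short-time estimate gives $\Phi^Y_t(K)\subseteq K$ for all sufficiently small $t\ge0$, so $\Phi^Y$ restricts to a fixed-point-free flow on $K$. The whole difficulty is now to locate a common zero despite this apparent freedom, and I would organize the remaining work around producing $Y$-cycles and then exploiting area preservation.

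First I would reduce cases~(ii) and~(iii) to case~(i). In case~(iii), $K$ lies in a planar region and is compact, invariant and fixed-point-free, so for any $p\in K$ the $\omega$-limit set $\omega(p)\subseteq K$ contains no zero of $Y$ and the Poincar\'e--Bendixson theorem identifies it as a $Y$-cycle lying in $K$. In case~(ii), the hypothesis $Y\in C^2$ is exactly what licenses Schwartz's extension of Poincar\'e--Bendixson to surfaces: the minimal subsets of $K$ are then $Y$-cycles, the only competing possibility being a minimal $2$-torus, which would force $\Z X$ to be two-dimensional and hence of vanishing index, contradicting the essentiality of $K$. In either case $K$ contains a $Y$-cycle, so it suffices to treat case~(i).

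So let $C\subset K$ be a $Y$-cycle. Here area preservation enters decisively: near $C$ the flow $\Phi^Y$ is locally Hamiltonian, $\Phi^Y=\Phi^{X_H}$ for a local Hamiltonian $H$, and $C$ is a regular level curve sitting in a maximal annulus foliated by the $Y$-cycles $\{H=c\}$. An area-preserving flow admits no attracting or repelling limit cycle, so this annular family cannot spiral; it terminates only at a zero of $Y$ or at $\p M$. This is precisely the dichotomy that the \emph{non}-conservative fields of Lima's Example~\ref{th:exlima} violate, which is why area preservation is indispensable. At an interior terminus the fixed point $z_0$ is an elliptic center, so some $D\Phi^Y_t(z_0)$ is a rotation through an angle that is not a multiple of $\pi$ and therefore fixes no line; but Proposition~\ref{th:wedge} makes the line field spanned by $X$ invariant under $\Phi^Y$, so $X_{z_0}=0$ as well. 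Thus $z_0$ is a common zero of $X$ and $Y$.

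The hard part, on which I expect the real work to concentrate, is to guarantee that such a common zero is produced \emph{inside the prescribed block} $K$, not in some neighbouring component of $\Z X$. I would address this by using area preservation a second time, nesting the level-cycles $\{H=c\}$ around the components of $K$ to build, inside any given neighbourhood, an open set $V\supseteq K$ whose boundary is a finite union of $Y$-cycles; Theorem~\ref{th:main}(b) then deposits a zero of $Y$ in $K$ and contradicts $\Z Y\cap K=\varnothing$. The delicate points are separation (the cycles must enclose $K$ rather than merely approach it) and finiteness of $\p V$, both coming from compactness of $K$ and the foliated annular structure; the correct handling of the $\p M$ terminus and of degenerate centers (where one must verify a genuinely rotational normal form for $z_0$); and, underlying everything, the additivity of $\msf i_K(X)$, which is what ensures that the essentiality $\msf i_K(X)\neq0$---to which the index-zero $Y$-cycles are invisible---can only be carried by common zeros lying in $K$.
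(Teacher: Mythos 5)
Your skeleton coincides with the paper's: assume $\Z Y\cap K=\varnothing$, use Poincar\'e--Bendixson (planar case) or the Schwartz--Sacksteder theorem ($C^2$ case) to produce a $Y$-cycle $\gam\subset K$, use area preservation to surround $\gam$ with a band of $Y$-cycles, and close the contradiction with Theorem~\ref{th:main}(b). But two steps as you present them do not go through. First, ``preserves area'' in this paper means only that $\Phi^Y$ preserves \emph{some} Borel measure that is positive and finite on nonempty precompact sets; such a measure need not have a density of any regularity, so there is no local Hamiltonian $H$, no foliation by level sets $\{H=c\}$, and no reason for an interior fixed point to be an elliptic center whose linearization is a nontrivial rotation --- an area-preserving flow can have a hyperbolic saddle, and even when the fixed point is a center the linearization can be trivial. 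The paper obtains the band of cycles without any of this: take a half-open arc $J$ topologically transverse to $Y$ at $p\in\gam$ and the first-return map $f\co J_0\hookrightarrow J$; if $f$ were not the identity, a flow-box region between $J_0$ and its image would be carried properly into itself, contradicting preservation of a measure positive on open sets and finite on precompact ones. This yields directly a cylinder or M\"obius band around $\gam$ filled with $Y$-cycles. Your elliptic-center paragraph is therefore both unjustified under the stated hypotheses and unnecessary: the common zero is not located at a center of your annulus (which indeed need not lie in $K$) but is forced inside $K$ by the index argument.

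Second, you have correctly identified the crux --- promoting the band of cycles around the single cycle $\gam$ to open neighborhoods of $K$ bounded by finitely many $Y$-cycles, which is what the hypothesis of Theorem~\ref{th:main}(b) requires --- but you leave it as a program (``separation,'' ``finiteness,'' ``the $\p M$ terminus,'' ``degenerate centers'') rather than an argument. The relevant observations are that $K$, being compact and relatively open in the positively $\Phi^Y$-invariant set $\Z X$, is itself positively $\Phi^Y$-invariant, so $K$ meets each cycle of the band either in the whole cycle or not at all; cycles of the band lying just outside $K$ then bound the required neighborhoods near $\gam$, and one must still assemble such bands over all of $K$. Without some such completion the proposal does not close the contradiction. (To be fair, this is precisely the point at which the paper's own write-up is most compressed --- it passes immediately from the band around $\gam$ to the invocation of Theorem~\ref{th:main}(b) --- so you have located the right difficulty, but you have not discharged it.)
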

\begin{definition}              \mylabel{th:defW}
For $X\in \V^\om_{\msf {in}} (M)$ define
\[
 \msf W(X):=\{Y\in\V^\om_{\msf {in}} (M) \co[X, Y]\wedge X=0\},
\]
which is the set of inward analytic vector fields on $M$ whose local
semiflows permute integral curves of $X$.  Propositions \ref{th:wedge}
and \ref{th:convex} imply $\msf W(X)$ is a convex cone that is closed
under Lie brackets, and  a subalgebra of $\V^\om (M)$
if $M$ is an analytic manifold without boundary.
\end{definition}
\begin{theorem}         \mylabel{th:mainC}
Assume Hypothesis \ref{th:hyp2} holds.  If   $X$ is analytic and
$\p M$ is  an analytic
subset of $\tilde M$, then 
$   \Z{\msf W (X)}\cap K\ne\varnothing$.
\end{theorem}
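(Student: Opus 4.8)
The plan is to deduce this from Theorem \ref{th:main}(a) by a compactness argument that upgrades the ``each $Y$ has a zero in $K$'' conclusion to a single common zero. Since $X$ is analytic and $\p M$ is an analytic subset, every $Y\in\msf W(X)$ lies in $\V^\om_{\msf{in}}(M)$ and is in particular analytic, so $\Z Y$ is closed and $\Z Y\cap K$ is compact; by Theorem \ref{th:main}(a), applied to each pair $(X,Y)$, it is also nonempty. Thus $\{\Z Y\cap K\co Y\in\msf W(X)\}$ is a family of nonempty compact subsets of the compact set $K$, and by the definition of common zeros $\Z{\msf W(X)}\cap K=\bigcap_{Y}(\Z Y\cap K)$. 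By the finite intersection property, it therefore suffices to prove that every finite subfamily $Y_1,\dots,Y_k\in\msf W(X)$ has a common zero in $K$.

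For the finite case I would argue by induction, using the following descent lemma: if $L\subseteq K$ is a block that is essential for $X$ and $Y\in\msf W(X)$, then $L\cap\Z Y$ contains a block $L'$ that is again essential for $X$. Granting this, one starts with $L_0=K$ (essential by hypothesis) and applies the lemma successively to $Y_1,\dots,Y_k$, producing essential-for-$X$ blocks $K=L_0\supseteq L_1\supseteq\cdots\supseteq L_k$ with $L_i\subseteq L_{i-1}\cap\Z{Y_i}$. Each $L_i$ is essential, hence nonempty, and $L_k\subseteq K\cap\bigcap_{i=1}^k\Z{Y_i}$ supplies the desired finite common zero. The induction deliberately keeps tracking essentiality \emph{for the fixed field $X$}, since Theorem \ref{th:main} and Definition \ref{th:defW} are framed in terms of essential blocks of $X$ together with membership $Y_i\in\msf W(X)$; this route needs only that each $Y_i$ belongs to $\msf W(X)$ and sidesteps forming combinations, so the convex cone structure is not actually invoked here.

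The descent lemma is where the real work lies, and it is the step I expect to be the main obstacle. By Proposition \ref{th:wedge}, $\Phi^Y$ preserves $\Z X$ and, whenever $q=\Phi^Y_t(p)$, carries $X_p$ to a positive multiple of $X_q$ via $T\Phi^Y_t$; hence the local semiflow $\Phi^Y$ acts on a neighborhood of the essential block $L$ while preserving the Poincar\'e--Hopf data of $X$. The guiding idea is that the $X$-index localizes on the $\Phi^Y$-fixed set: the part of $\Z X\cap L$ genuinely moved by $\Phi^Y$ is permuted in index-cancelling families and contributes nothing net, so the full index $\msf i_L(X)\ne0$ must be carried by $L\cap\Z Y$. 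Extracting from this a bona fide isolating sub-block $L'\subseteq L\cap\Z Y$ with $\msf i_{L'}(X)\ne0$ requires care, because $L\cap\Z Y$ need not be relatively open in $\Z X$; one must isolate a suitable relatively open, $\Phi^Y$-positively invariant piece and invoke additivity of the vector field index from Section \ref{sec:index} together with the invariance of $\Z X$ recorded after Hypothesis \ref{th:hyp2}.

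In effect the descent lemma is a refinement of the mechanism behind Theorem \ref{th:main}(a): that theorem already places a zero of $Y$ in the essential block, and the only extra content needed here is to place it inside a sub-block on which $X$ retains nonzero index, so that the construction iterates. I would therefore organize the write-up so that the proof of Theorem \ref{th:main}(a) is phrased to output such an essential sub-block in the first place, after which Theorem \ref{th:mainC} follows formally from the finite-intersection reduction above. The remaining routine points are that passing to $L$ in place of $K$ leaves Hypothesis \ref{th:hyp2} intact, and that ``essential'' forces nonemptiness, both of which are immediate.
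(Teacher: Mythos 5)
Your reduction via the finite intersection property is fine, but the argument then rests entirely on the descent lemma, which you correctly identify as ``where the real work lies'' --- and that lemma is false as stated. The problem is that $L\cap\Z Y$ need not contain \emph{any} block for $X$, essential or not. In the critical case for this theorem, $K$ is a circle contained in $\Z X$ (after the reductions forced by analyticity, this is exactly the situation one cannot avoid), and $\Z Y\cap K$ may be a nonempty \emph{finite} set $F$. Every relatively open subset of $\Z X$ meeting $F$ contains an arc of the circle $K$ around each of its points, so no nonempty subset of $F$ is relatively open in $\Z X$: there is simply no sub-block inside $L\cap\Z Y$ to which an index could be attached. The heuristic that the index ``localizes on the $\Phi^Y$-fixed set'' cannot be made rigorous here, because $\msf i_L(X)$ is a single integer attached to a relatively open piece of $\Z X$; it does not disintegrate over arbitrary closed $\Phi^Y$-invariant subsets. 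So the induction never gets off the ground past the first step, and reorganizing the proof of Theorem \ref{th:main}(a) to ``output an essential sub-block'' is asking for something that theorem cannot deliver.

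The paper's proof handles precisely this obstruction by using the analyticity hypotheses in an essential way that your proposal never touches. Since $\p M$ is an analytic subset of $\tilde M$, the block $K$ is semianalytic; after discarding the cases where $K$ (or its singular set, or a finite positively invariant intersection) is finite --- in which case those points are automatically common zeros, being finite sets positively invariant under every local semiflow $\Phi^Y$ --- one reduces to $K$ an analytic circle. The key dichotomy is then: any positively $\msf W(X)$-invariant semianalytic set meeting $K$ either contains all of $K$ or meets it in a finite (hence common-zero) set. This converts ``$\Z Y$ meets $K$'' (which is what Theorem \ref{th:main} gives) into ``$K\subset\Z Y$,'' after which the family $\mcal P(K)$ of subsets $\ttt\subset\msf W(X)$ with $\Z\ttt\cap K\ne\varnothing$ is inductively ordered and Zorn's lemma finishes. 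In other words, the upgrade from individual zeros to a common zero comes not from an index-theoretic descent but from the rigidity of semianalytic invariant subsets of a circle. If you want to salvage your outline, the finite case $F$ above must be treated as a separate, favorable branch (those points \emph{are} the common zeros), and the ``$K\subset\Z Y$'' branch replaces your descent lemma; both branches require the semianalytic structure you have not invoked.
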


\paragraph{Actions of Lie algebras and Lie groups}

\begin{theorem}         \mylabel{th:liealg}
Let $M$ be an analytic surface and $\gg$ a Lie algebra (perhaps
infinite dimensional) of analytic vector fields on $M$ that are
tangent to $\p M$. Assume $X\in \gg$ spans a nontrivial ideal.
Then:
\begin{description}

\item[(a)] $\Z \gg$ meets every essential block of zeros for $X$.

\item[(b)] If $M$ is compact and $\chi (M)\ne 0$, then
  \ $\Z{\gg}\ne\varnothing$.

\end{description}
\end{theorem}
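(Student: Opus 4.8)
The plan is to deduce Theorem \ref{th:liealg} from the main results already established, treating the two parts somewhat differently. For part (a), let $K$ be an essential block of zeros for $X$. The hypothesis that $X$ spans a nontrivial ideal means that for every $Y\in\gg$ we have $[Y,X]\in\RR\cdot X$, so in particular $[X,Y]\wedge X=0$ for all $Y\in\gg$. Since the elements of $\gg$ are analytic and tangent to $\p M$ on an analytic surface, each such $Y$ lies in $\V^\om_{\msf{in}}(M)$ (tangency to $\p M$ for a field on an analytic manifold gives the inward condition), and hence $Y\in\msf W(X)$ in the sense of Definition \ref{th:defW}. Therefore $\gg\subset\msf W(X)$, and Theorem \ref{th:mainC} immediately yields $\Z{\msf W(X)}\cap K\subset\Z\gg\cap K$, giving $\Z\gg\cap K\ne\varnothing$.

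The main point to verify carefully is that every $Y\in\gg$ actually satisfies the hypotheses needed to apply Theorem \ref{th:mainC}, namely that $\gg\subset\msf W(X)$ as claimed and that the pair $(X,Y)$ meets Hypothesis \ref{th:hyp2}. The first three bullets of that hypothesis ($M,\tilde M$ surfaces; $X,Y$ inward $C^1$; $K$ essential) hold by assumption together with the observation above that tangency to an analytic $\p M$ forces inwardness; the fourth bullet $[X,Y]\wedge X=0$ is exactly the ideal condition. Since Theorem \ref{th:mainC} concludes $\Z{\msf W(X)}\cap K\ne\varnothing$ in one stroke for the whole cone $\msf W(X)$, and $\gg\subset\msf W(X)$, no separate intersection argument over the possibly infinite-dimensional $\gg$ is needed --- this is where invoking \ref{th:mainC} rather than \ref{th:main} pays off, because it handles the common-zero set of an entire family at once.

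For part (b), assume $M$ is compact with $\chi(M)\ne 0$. Here the task reduces to producing an essential block for $X$ to which part (a) applies, together with verifying that $X$ itself has a zero forming such a block. Because $M$ is compact, $\Z X$ is a block with $M$ as isolating neighborhood, and the vector field index $\msf i_M(X)$ equals the fixed point index of $\Phi^X_t$ on all of $M$. By the Poincar\'e--Hopf theorem (in the semiflow form indicated after the definition of the vector field index), this index equals $\chi(M)\ne 0$, so $K:=\Z X$ is an essential block and in particular nonempty. Applying part (a) to this $K$ gives $\Z\gg\cap K\ne\varnothing$, hence $\Z\gg\ne\varnothing$.

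The step I expect to be the genuine obstacle is the identification $\msf i_M(X)=\chi(M)$ in part (b): one must know that for an inward field on a compact surface the fixed point index of the time-$t$ map of the semiflow agrees with the Euler characteristic, even when $\p M$ is nonsmooth and $X$ may vanish on $\p M$. This is the kind of Poincar\'e--Hopf statement alluded to in the remark following the vector field index, and making it rigorous in the present generality (nonsmooth boundary, inward but not necessarily transverse field) is the substantive point; everything else in the proof is a direct assembly of Definition \ref{th:defW} and Theorem \ref{th:mainC}.
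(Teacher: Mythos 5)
Your proposal is correct and follows essentially the same route as the paper: establish $\gg\subset\msf W(X)$ from the ideal condition and deduce (a) from Theorem \ref{th:mainC}, then obtain (b) by noting that $\Z X$ is an essential block via Proposition \ref{th:ichi2} (which is exactly the Poincar\'e--Hopf identification $\msf i(X,M)=\chi(M)$ you flag as the substantive point; the paper disposes of it by property (FP7) of the fixed point index).
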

Note that $\gg$ has $1$-dimensional ideal if its center
is nontrivial, or $\gg$ is finite dimensional and supersoluble
(Jacobson \cite[Ch{.\ }2, Th{.\ }14]{Jacobson62}).  A finite
dimensional solvable Lie algebra of vector fields on a surface has
derived length $\le 3$ (Epstein \& Thurston \cite{ET79}).  Plante
\cite{Plante88} points out that $\V^\omega (\R 2)$ contains
infinite-dimensional subalgebras, such as the Lie algebra of quadratic
vector fields.

An {\em action} of a group $G$ on a manifold $M$ is a homomorphism
$\alpha\co g\to g^\alpha$  from $G$ to the homeomorphism group of $P$,
such that the 
corresponding {\em evaluation map} 
\[
 \msf{ev_\alpha}\co G\times P\to P, \quad (g,p)\mapsto g^\alpha (p)
\]
is continuous.  When 
$\msf{ev_\alpha}$ is  analytic, $\alpha$ is an {\em analytic action}. 
\begin{theorem}         \mylabel{th:liegroup}
Assume $M$ is a compact analytic surface and  $G$
is  a connected Lie group having a one-dimensional normal subgroup.
If $\chi (M)\ne 0$, every effective analytic action of $G$ on $M$ has a fixed
point.
\end{theorem}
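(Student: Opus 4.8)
The plan is to reduce Theorem~\ref{th:liegroup} to Theorem~\ref{th:liealg}(b) by replacing the group action with its infinitesimal Lie algebra of vector fields. Write $\gg:=\mathrm{Lie}(G)$, and let $\alpha$ be the given effective analytic action. Each $\xi\in\gg$ generates a one-parameter subgroup $t\mapsto\exp(t\xi)$, and setting $X_\xi(p):=\frac{d}{dt}\big|_{t=0}\big(\exp(t\xi)\big)^\alpha(p)$ defines a vector field $X_\xi$ on $M$. Because the evaluation map $\msf{ev}_\alpha$ is analytic, $X_\xi\in\V^\om(M)$; and because each $\big(\exp(t\xi)\big)^\alpha$ carries $M$ into itself, hence $\p M$ into itself, $X_\xi$ is tangent to $\p M$. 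The map $\xi\mapsto X_\xi$ is a homomorphism of Lie algebras up to sign (for a left action it is an anti-homomorphism, which changes nothing below), so its image $\hat\gg$ is a Lie algebra of analytic vector fields on $M$ tangent to $\p M$, exactly of the kind to which Theorem~\ref{th:liealg} applies.

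First I would record injectivity: the kernel of $\xi\mapsto X_\xi$ is the Lie algebra of the ineffective subgroup $\{g\in G\co g^\alpha=\mathrm{id}_M\}$, which is trivial since $\alpha$ is effective. Thus $\hat\gg\cong\gg$. Next I would transport the distinguished ideal. By hypothesis $G$ has a one-dimensional normal subgroup $N$; its Lie algebra $\nn=\RR\xi_0\subset\gg$ is a one-dimensional ideal, since $gNg^{-1}=N$ gives $\mathrm{Ad}(g)\nn=\nn$ for all $g\in G$, and differentiating along $g=\exp(s\eta)$ yields $[\gg,\nn]\subset\nn$. Put $X:=X_{\xi_0}$. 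Injectivity forces $X\neq0$, so $\RR X$, the image of $\nn$, is a nontrivial one-dimensional ideal of $\hat\gg$; equivalently, $X$ spans a nontrivial ideal of $\hat\gg$.

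Now I would invoke Theorem~\ref{th:liealg}(b) with the analytic surface $M$, the Lie algebra $\hat\gg$, and the field $X$: since $M$ is compact and $\chi(M)\neq0$, it gives $\Z{\hat\gg}\neq\varnothing$. Finally I would identify common zeros with global fixed points. A point $p$ lies in $\Z{\hat\gg}$ exactly when $X_\xi(p)=0$ for every $\xi\in\gg$, i.e.\ when $\big(\exp(t\xi)\big)^\alpha(p)=p$ for all $\xi$ and $t$; as $G$ is connected it is generated by $\exp(\gg)$, so this is equivalent to $p\in\Fix{G}$. Hence $\Z{\hat\gg}=\Fix{G}$, and the nonemptiness just obtained produces the desired fixed point.

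The genuine difficulty is entirely contained in Theorem~\ref{th:liealg}; what remains is the standard dictionary between a connected-group action and its infinitesimal generators. The one step that truly needs care---and the only place effectiveness enters---is guaranteeing that the one-dimensional normal subgroup survives as a \emph{nonzero} ideal after passing to $\hat\gg$: without effectiveness, $N$ could lie in the ineffective kernel and $X$ could vanish identically, so that $X$ would span only the trivial ideal and Theorem~\ref{th:liealg} would not apply. A routine check is that ``analytic surface'' in the hypothesis is precisely the analytic-manifold condition ($\p M$ an analytic submanifold of $\tilde M$) required by Theorem~\ref{th:liealg}.
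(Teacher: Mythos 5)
Your proposal is correct and follows essentially the same route as the paper: pass to the induced Lie algebra of analytic vector fields (an isomorphism by effectiveness), observe that the image of the normal subgroup's Lie algebra is a nontrivial one-dimensional ideal spanned by some $X$, apply Theorem~\ref{th:liealg}(b), and identify $\Z\gg$ with the fixed point set of the connected group. The paper's proof is just a terser version of the same argument; your added checks (tangency to $\p M$, injectivity, $X\neq 0$) are exactly the details it leaves implicit.
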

For supersoluble $G$ this is due to Hirsch \& Weinstein \cite{HW00}.
\subsection{Background on group actions}   
The literature on actions of connected
Lie groups $G$ include the following notable results:
\begin{proposition}             \mylabel{th:ET}
If $G$ is solvable (respectively, nilpotent) and acts
  effectively on an $n$-dimensional manifold,  the derived length of
  $G$ is $\le n+1$   (respectively, $\le n$) \

{\em (Epstein \& Thurston \cite{ET79}).} 
\end{proposition}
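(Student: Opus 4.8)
The plan is to reduce the group-theoretic statement to a statement about Lie algebras of vector fields and then to bound the derived length by passing to formal power series. Since $G$ is connected, its derived length coincides with that of its Lie algebra $\gg$, so it suffices to bound the derived length of $\gg$. Taking the action to be smooth, the infinitesimal action is a Lie algebra homomorphism $\rho\co\gg\to\V(M)$ sending $\xi$ to the vector field that generates the one-parameter group $t\mapsto(\exp t\xi)^\alpha$. Effectiveness together with connectedness forces $\rho$ to be injective: its kernel is an ideal whose corresponding connected normal subgroup acts trivially and is therefore trivial, so the kernel is $0$. Thus $\gg$ is realized as a finite-dimensional solvable (respectively nilpotent) Lie algebra of vector fields on an $n$-manifold, and the task becomes to show its derived length is $\le n+1$ (respectively $\le n$).

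Next I would localize. For each $p\in M$, Taylor expansion at $p$ gives a Lie algebra homomorphism $\gg\to W_n$ into the algebra of formal vector fields in $n$ variables, and a vector field all of whose Taylor coefficients vanish at every point is identically zero; hence the kernels of these homomorphisms intersect in $0$. Consequently the derived length of $\gg$ is the supremum over $p$ of the derived lengths of the images, since $\gg^{(d)}=0$ if and only if each image satisfies the same. This reduces everything to the purely algebraic claim that a solvable (respectively nilpotent) subalgebra of $W_n$ has derived length $\le n+1$ (respectively $\le n$).

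To prove the algebraic claim I would use the order filtration $W_n=L_{-1}\supseteq L_0\supseteq L_1\supseteq\cdots$, where $L_k$ consists of formal fields whose coefficients vanish to order $\ge k+1$, so that $[L_i,L_j]\subseteq L_{i+j}$ and $\bigcap_k L_k=0$. The quotient $W_n/L_0\cong\RR^n$ is abelian, so a single derivation carries $\gg$ into $\gg\cap L_0$. The linear part then gives a map $\gg\cap L_0\to L_0/L_1\cong\mathfrak{gl}_n$ whose image is solvable; by Lie's theorem (after complexifying) it is triangularizable, while the higher-order commutators descend rapidly because $[L_k,L_k]\subseteq L_{2k}$ and $\bigcap_k L_k=0$. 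Feeding the finite-dimensionality of $\gg$ into this filtration forces the derived series to reach $0$ after a number of steps controlled by $n$ alone; in the nilpotent case Engel's theorem replaces Lie's theorem, strict triangularizability saves one step, and one obtains the sharper bound $n$.

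The hard part will be the bookkeeping in the last step: extracting from the order filtration a bound on the number of derivation steps that depends only on the number $n$ of variables and not on $\dim\gg$, and showing that the contributions of the translation part $W_n/L_0$, the linear part $L_0/L_1$, and the higher-order parts combine to exactly $n+1$ (respectively $n$). The reductions in the first two paragraphs are routine, but optimizing the filtration estimate down to the stated linear bound is the technical heart of the Epstein--Thurston argument.
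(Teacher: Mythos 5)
The paper does not prove this proposition; it quotes it as background with attribution to Epstein \& Thurston \cite{ET79}, so there is no internal proof to compare yours against. Judged on its own, your proposal is a pair of (correct) reductions followed by a placeholder where the theorem's actual content should be. The first two paragraphs are fine: effectiveness plus connectedness makes the infinitesimal action $\gg\to\V(M)$ injective (though note the paper's notion of an action is only continuous, so assuming the action is smooth enough to differentiate is itself an added hypothesis), and $\gg^{(d)}=0$ iff the image of $\gg$ in the formal vector fields $W_n$ at every point has vanishing $d$-th derived algebra, since a field whose full Taylor expansion vanishes at every point is zero. But this leaves you with exactly the statement you set out to prove, now for finite-dimensional solvable (resp.\ nilpotent) subalgebras of $W_n$, and the third paragraph does not prove it.

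Concretely, the filtration bookkeeping does not assemble into the bound $n+1$, and no argument is offered that it does. The translation quotient $W_n/L_0$ costs one derivation. Lie's theorem controls the derived length of the \emph{image} of $\gg\cap L_0$ in $L_0/L_1\cong\mathfrak{gl}_n$, but that length is about $\log_2 n+1$, not $n$, and --- more importantly --- elements of the derived series that die in $\mathfrak{gl}_n$ only land in $L_1$, not in $0$. What then remains is a finite-dimensional subalgebra $\hh\subseteq L_1$; the relation $[L_i,L_j]\subseteq L_{i+j}$ together with $\bigcap_k L_k=0$ does show $\hh$ is nilpotent, but its nilpotency class, and hence its derived length, is a priori bounded only in terms of $\dim\hh$, which has nothing to do with $n$. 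So the three ``contributions'' you list do not visibly combine to $n+1$ (nor to $n$ in the nilpotent case, where ``strict triangularizability saves one step'' is asserted rather than argued). You acknowledge that this is ``the technical heart,'' but that heart is the entire theorem: the published Epstein--Thurston argument is not a jet-filtration count but a delicate induction on the dimension of the manifold, exploiting the orbits of the last nonvanishing term of the derived series and the induced action on a lower-dimensional local leaf space. Without that step (or a genuinely worked-out replacement for it), the proposal is a plan, not a proof.
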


In the next two propositions $M$ denotes  a compact connected surface.  
\begin{proposition}             \mylabel{th:background}
Assume $G$ acts on  $M$ without fixed points. 
\begin{description}

\item[(i)]   If  $G$ is  nilpotent, $\chi (M)\ne 0$  \ 
{\em (Plante  \cite{Plante86}).}

\item[(ii)]  If the action is analytic, $\chi (M)\ge 0$  \ 
{\em (Turiel \cite{Turiel03}, Hirsch
\cite{Hirsch03}).}
\end{description}
\end{proposition}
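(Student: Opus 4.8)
The two parts both reduce, through the standard dictionary between fixed points of a connected group and common zeros of its infinitesimal generators, to statements about the common zero set $\Z\gg$ of the Lie algebra $\gg\subset\V^1(M)$ (respectively $\gg\subset\V^\om(M)$) that generates the action: a point is fixed by all of the connected group $G$ exactly when it is a zero of every vector field in $\gg$. I would prove the two results in their existence form, that is, (i) a nilpotent $G$ with $\chi(M)\ne 0$ has a fixed point (contrapositive to the fixed-point-free conclusion $\chi(M)=0$), and (ii) an analytic action with $\chi(M)<0$ has a fixed point (contrapositive to $\chi(M)\ge 0$). The common starting point is Poincar\'e--Hopf: for any single generator $X$ one has $\Z X\ne\varnothing$ with total index $\chi(M)$, so when $\chi(M)\ne 0$ the compact set $\Z X$ splits into finitely many blocks, at least one of which is essential.

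For the nilpotent case (i) the engine is Lima's theorem \cite{Lima64}, that commuting $C^1$ vector fields on a surface with $\chi(M)\ne 0$ have a common zero; this settles the abelian case with no appeal to analyticity. I would bootstrap to a nilpotent $\gg$ by induction on $\dim\gg$ along the descending central series. Choose a nonzero $X$ in the (nontrivial) center of $\gg$; then $[X,Y]=0$ for every $Y\in\gg$, so each flow $\Phi^Y$ preserves the integral curves of $X$ and leaves $\Z X$ invariant, and by connectedness of $G$ it fixes setwise each of the finitely many essential blocks. Passing to the quotient algebra $\gg/\RR X$ acting near an essential block $K$, the inductive hypothesis yields a common zero of $\gg/\RR X$ in $K$; centrality of $X$ together with Lima's theorem then forces $X$ itself to vanish on an essential sub-block of that set, producing a common zero of all of $\gg$. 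The main obstacle is that $\Z X$ and the successive common-zero sets need not be manifolds, so ``restricting the action'' and keeping an essential block alive through the induction must be carried out through the block-and-index calculus of this paper rather than by naive restriction to a subsurface.

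For the analytic case (ii), analyticity lets me replace Lima's theorem by Bonatti's theorem in the form of Theorem~\ref{th:mainC}: when $X$ is analytic, $\Z{\msf W (X)}$ meets every essential block of $X$, and $\gg\subseteq\msf W(X)$ as soon as $\RR X$ is an ideal of $\gg$. The whole problem thus collapses to producing, when $\chi(M)<0$, a nonzero $X\in\gg$ spanning a one-dimensional ideal, after which Theorem~\ref{th:liealg}(a) applied to an essential block $K$ of $X$ (essential because $\chi(M)\ne 0$) delivers a point of $\Z\gg\cap K$, i.e.\ a fixed point. Producing the ideal is exactly where $\chi(M)<0$ enters: such an $M$ is aspherical with hyperbolic fundamental group, so a connected Lie group acting analytically without fixed points cannot act transitively or semisimply as $SO(3)$ does on $\S 2$ (which is precisely why the sharp conclusion here is $\chi(M)\ge 0$ rather than $\chi(M)\ne 0$); via the Levi decomposition the semisimple factor must act trivially or with a fixed point, the effective part of the action is forced to be solvable, and a solvable Lie algebra of vector fields on a surface has derived length $\le 3$ by Epstein--Thurston (Proposition~\ref{th:ET}), whence a supersoluble reduction supplies the one-dimensional ideal. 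I expect this Lie-theoretic structure step---ruling out fixed-point-free connected analytic actions that are not virtually solvable on a surface of negative Euler characteristic---to be the principal obstacle; once it is secured, Theorem~\ref{th:liealg}(a) closes the argument and the resulting fixed point contradicts the hypothesis, forcing $\chi(M)\ge 0$.
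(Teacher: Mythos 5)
You should first note that the paper itself contains no proof of Proposition \ref{th:background}: it is stated purely as background, with the proofs residing in Plante \cite{Plante86} and Turiel \cite{Turiel03}. (Also, item (i) as printed must be a typo for $\chi(M)=0$ --- an irrational flow on the torus is a fixed-point-free nilpotent action with $\chi=0$ --- and you correctly worked with that contrapositive.) Your proposal is thus an attempt to reprove two substantial cited theorems, and both halves have genuine gaps. In (i), the inductive step is not well formed: after choosing central $X\in\gg$ you ``pass to the quotient algebra $\gg/\RR X$ acting near an essential block $K$,'' but $\gg/\RR X$ is not a Lie algebra of vector fields on $M$ and does not act on any neighborhood of $K$. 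What does act is $G$ (with the central subgroup acting trivially, hence $G/Z$) on the compact invariant set $\Z X$, which is typically not a surface, so an inductive hypothesis about surface actions cannot be invoked there. The clause ``centrality of $X$ together with Lima's theorem then forces $X$ itself to vanish on an essential sub-block'' is vacuous, since $X$ already vanishes on all of $K\subset \Z X$; and your concession that the induction ``must be carried out through the block-and-index calculus'' defers exactly the content of Plante's proof rather than supplying it. Lima's theorem does dispose of the abelian case, but nothing in the sketch actually bridges abelian to nilpotent.

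In (ii), the decisive step fails as stated. Epstein--Thurston (Proposition \ref{th:ET}) bounds the derived length of a solvable algebra, but solvable does not imply supersoluble, and a real solvable Lie algebra need not contain a one-dimensional ideal: for instance $\ee(2)$ (infinitesimal rotation plus translations) has none, since the rotation leaves no line of translations invariant; Lie's theorem over $\RR$ only yields an ideal of dimension $\le 2$. This is precisely why Theorem \ref{th:liegroup} carries the one-dimensional normal subgroup as a \emph{hypothesis} --- your plan, if it worked, would render that hypothesis vacuous. Moreover, eliminating a nontrivial Levi factor (``the semisimple factor must act trivially or with a fixed point'') for actions on a surface with $\chi(M)<0$ is itself a hard theorem, essentially Belliart's Proposition \ref{th:aff}(a); you assert it rather than prove it, and invoking it would make the argument rest on results of the same depth as the target. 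Turiel's actual proof proceeds by a direct analysis of analytic actions and their singular sets and does not factor through producing a one-dimensional ideal, so the reduction to Theorem \ref{th:liealg}(a) cannot be completed along the route you describe.
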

%


%
\begin{proposition}    \mylabel{th:aff}
  Let ${\rm Aff}_+ (\R m)$ denote the group of
  orientation-preserving affine homeomorphisms of $\R m$. 
\begin{description}

\item[(a)] If $\chi (M) < 0$ and $G$ acts effectively on $M$ without
  fixed points,  then $G$
  has a quotient   isomorphic to  ${\rm Aff}_+ (\R 1)$ \ {\em
    (Belliart \cite{Belliart97}).} 

\item[(b)]  ${\rm Aff}_+(\R 1)$ has effective fixed-point free
  actions on $M$  \ {\em (Plante \cite{Plante86})}. 

\item[(c)]   
 ${\rm Aff}_+ (\R 2)$ has   effective analytic 
  actions on $M$  \ {\em (Turiel \cite{Turiel03})}.

\end{description}
\end{proposition}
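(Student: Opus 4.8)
The three assertions are independent results drawn from distinct sources, and it is cleanest to treat them separately. The organizing observation is that, since $\chi(M)\ne 0$ in (a) while $\chi(M)\ge 0$ is forced in the analytic setting of (c) by Proposition \ref{th:background}(ii), the difficulty never lies in producing zeros of individual one-parameter subgroups—by Poincar\'e--Hopf each such flow already has a fixed point—but in the global interaction of the flows. Part (a) is a necessary-condition (structural) statement; parts (b) and (c) are realizations, exhibiting effective actions in which no common fixed point survives, and thereby showing that neither the analyticity hypothesis nor the one-dimensional-normal-subgroup hypothesis in Theorem \ref{th:liegroup} can simply be dropped. I would follow the cited authors for the deep content of (a) and give self-contained constructions for (b) and (c).

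For part (a), I would pass from the action to the Lie algebra $\gg$ of vector fields it induces on $M$ and study the global dynamics of the resulting flows. The plan is to locate a minimal set or a recurrent orbit and to analyze, in a transverse coordinate, the induced one-parameter families there; on a surface with $\chi(M)<0$ the Poincar\'e--Bendixson theory together with the absence of a common zero should force a nontrivial contracting/expanding ($x\mapsto ax+b$) action along a separatrix or a leaf of an invariant foliation. Reading this transverse affine action off the group yields a continuous homomorphism $G\to{\rm Aff}_+(\R 1)$, which one argues is surjective because a proper connected image is abelian and would be reconcilable with a common fixed point, contradicting the hypothesis. Extracting and globalizing this transverse structure is the genuine dynamical input and by far the main obstacle; here I would follow Belliart \cite{Belliart97}.

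For part (b), the Lie algebra of ${\rm Aff}_+(\R 1)$ is the two-dimensional nonabelian algebra with $[X,Y]=X$, so it suffices to realize this relation by a pair of inward fields with $\Z X\cap\Z Y=\empty$. On the disk this is exactly Lima's Example \ref{th:exlima}, where the generated group is a connected solvable nonabelian Lie group, namely ${\rm Aff}_+(\R 1)$, and $\Z X$ (the boundary circle) is disjoint from the single interior zero of $Y$. For a closed surface $M$ I would transplant this renormalizing pair along a flow: the relation $[X,Y]=X$ makes $\Phi^Y$ permute the integral curves of $X$ and multiply $X$ by a positive factor, exactly the mechanism of Proposition \ref{th:wedge}, so $\Z X$ is $\Phi^Y$-invariant and one can place the zeros of $Y$ off $\Z X$; the construction is Plante's \cite{Plante86}.

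For part (c), I would build the action explicitly from the embedding
\[
 {\rm Aff}_+(\R 2)\hookrightarrow \mathrm{PGL}_3(\RR),\qquad (A,b)\mapsto\left[\begin{pmatrix} A & b\\ 0 & 1\end{pmatrix}\right],
\]
which is injective because a scalar matrix with lower-right entry $1$ is the identity, and which therefore gives an effective \emph{analytic} action of ${\rm Aff}_+(\R 2)$ on $\mathbb{RP}^2$. This action has no common fixed point: an affine point is moved by some translation, while a point at infinity $[x:y:0]$ would have to be a common eigenray of all $A\in\mathrm{GL}^+_2(\RR)$, which is impossible. Since $\chi(\mathbb{RP}^2)=1\ge 0$, this is consistent with—and shows the sharpness of—Proposition \ref{th:background}(ii) and Theorem \ref{th:liegroup} (note that ${\rm Aff}_+(\R 2)$ has no one-dimensional normal subgroup, so the latter does not apply). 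For the remaining surfaces with $\chi(M)\ge 0$ I would invoke Turiel's explicit analytic models \cite{Turiel03}.
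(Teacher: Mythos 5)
You should first note a structural point: the paper contains no proof of Proposition \ref{th:aff} at all. It is stated as background, with each part attributed to its source (Belliart for (a), Plante for (b), Turiel for (c)), so there is no internal argument to compare yours against; deferring the deep content to those papers is exactly what the author does. Measured against that, your write-up is mostly sound and in places adds genuine content. Your heuristic for (a) is a fair description of Belliart's strategy but, as you concede, not a proof. For (b), identifying the disk case with Lima's Example \ref{th:exlima} is correct: the unique connected two-dimensional nonabelian Lie group is ${\rm Aff}_+(\R 1)$, and the action is fixed-point free because $\Z X\cap \Z Y=\empty$ even though each field separately has zeros. Two small caveats there: passing from a pair of complete fields with $[X,Y]=X$ to an action of the simply connected group uses Palais's integration theorem (worth saying), and your ``transplant along a flow'' sketch for closed surfaces is not an argument --- on a closed $M$ with $\chi(M)\ne 0$ both flows are forced by Poincar\'e--Hopf to have zeros, and arranging the two zero sets to be disjoint is precisely the nontrivial content of Plante's construction. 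Your projective model for (c) is correct and a nice self-contained verification of one case: the map into $\mathrm{PGL}_3(\RR)$ is injective, the induced action on $\mathbb{RP}^2$ is analytic and effective, and the fixed-point analysis is right (minor quibble: a fixed point at infinity corresponds to a real eigen\emph{line}, i.e.\ any nonzero real eigenvalue, not an eigenray; this changes nothing since a rotation has neither).

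The one genuine slip is the scope of (c). Unlike (b), the statement of (c) does \emph{not} say ``fixed-point free'': it asserts that ${\rm Aff}_+(\R 2)$ has effective analytic actions on \emph{every} compact connected surface $M$, including those with $\chi(M)<0$, where by Proposition \ref{th:background}(ii) any such action necessarily has fixed points. You have read a fixed-point-free requirement into (c) (hence your inference that $\chi(M)\ge 0$ ``is forced''), and your plan to ``invoke Turiel's models for the remaining surfaces with $\chi(M)\ge 0$'' consequently leaves the surfaces of negative Euler characteristic uncovered. This also shifts the proposition's role in the paper's narrative: read as stated, (a)--(c) together say that effectiveness alone --- even analytic, even for a large nonsolvable group like ${\rm Aff}_+(\R 2)$, which indeed has no one-dimensional normal subgroup, as you correctly check --- imposes no restriction on the surface; it is fixed-point-freeness (and then $\chi(M)<0$ versus $\chi(M)\ge 0$) that creates obstructions. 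Your $\mathbb{RP}^2$ example, where the action happens to be fixed-point free, remains a valid instance of (c), but to cover the claim as written you must cite Turiel's constructions for all $M$, not only for $\chi(M)\ge 0$.
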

For related results see the references above, also Belliart
\cite{Belliart97}, Hirsch \cite{Hirsch2010, Hirsch2011}, Molino \&
Turiel \cite{MolinoTuriel86, MolinoTuriel88}, Plante \cite{Plante91},
Thurston \cite{Thurston74}, Turiel \cite{Turiel89}.  Transitive
effective surface actions are classified in 
Mostow's thesis \cite{Mostow50}, with a useful summary in Belliart
\cite{Belliart97}.

\section{Dynamics}    \mylabel{sec:dynamics}
Let $\Psi:=\{\Psi_t\}_{t\in \msf T}$ denote a local flow ($\msf
T=\RR)$ or a local semiflow ($\msf T=\Rp)$ on a topological space $S$.
Each $\Psi_t$ is a homeomorphism from an open set $\mcal D
(\Psi_t)\subset S$
onto a set $\mcal R (\Psi_t)\subset S$, such that:

\begin{itemize}

\item $\Psi_t (p)$ is continuous in $(t,p)$,

\item $\Psi_0$ is the identity map of $S$,

\item if $0 \le |s| \le |t|$ and $|st|\ge 0$ then $\mcal D
  (\Psi^s)\supset \mcal D (\Psi^t)$,

\item  $  \Psi_t (\Psi_s (p)) =  \Psi_{t+s} (p)$.
\end{itemize}
We adopt the convention that notation of the form ``$\Psi_t (x)$''
presumes $x\in\mcal D (\Psi_t)$.

\begin{definition}              \mylabel{th:posinv}
 A set $L\subset S$ is {\em positively invariant} under $\Psi$
 provided $\Psi_t$ 
 maps $L\cap \mcal D 
 (\Psi_t)$ into $L$ for all $t\ge 0$, and {\em invariant} when
 $L\subset \mcal D (\Psi_t)\cap\mcal R (\Psi_t)$ for all $t  \in \msf
 T$.  When $\Psi$ is generated by  a vector field $Y$ we use the
 analogous terms  ``positively $Y$-invariant'' and ``$Y$-invariant.'' 

\end{definition}

Let $M, \tilde M$ be as in Hypothesis \ref{th:hypmain}. 
When $\Psi$ is a local semiflow on $M$, the theorem on
invariance of domain shows that $\mcal R (\Psi_t)$ is open in $M$ when
$\Phi$ is a local flow, and also when $\mcal D (\Psi_t)\cap\p M=\empty$.
This implies
 $M\setminus \p M$ is  positively
invariant under every local semiflow on $M$. 

\begin{proposition}             \mylabel{th:wedge}
Assume  $X, Y\in \V_{\msf{in}} (M)$ and $[X, Y]\wedge X=0$.
\begin{description}

\item[(a)] If $\Phi^Y_t (p)=q$ then $T_p\Phi^Y_t \co X_p\mapsto
  cX_q,\, c >0$.

\item[(b)]  $\Phi^Y_t$ sends integral curves of $X$  to
integral curves of $X$.

\item[(c)] $\Z X$ is $Y$-invariant.

\end{description}
\end{proposition}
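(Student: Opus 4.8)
The plan is to push the single vector $X_p$ along the $Y$-orbit of $p$ and show that it remains a positive multiple of $X$ itself. Fix $p$, put $\gamma(t):=\Phi^Y_t(p)$ and $V(t):=T_p\Phi^Y_t(X_p)\in T_{\gamma(t)}M$. Since $Y$ is $C^1$, the curve $V$ solves the variational (linearized) equation of $\Phi^Y$ along $\gamma$, namely $\dot V=(DY)_{\gamma(t)}V$ in a chart; this is linear, so $V(t)=0$ iff $X_p=0$. Here I use that an interior point has its forward orbit in the interior (recall $M\setminus\partial M$ is positively invariant), where $\Phi^Y_t$ is a local diffeomorphism and hence $T_p\Phi^Y_t$ is an isomorphism. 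Assertion (a) is the statement $V(t)=c\,X_{\gamma(t)}$ with $c>0$, and (b), (c) will be read off from it.

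To compare $V$ with the field value $x(t):=X_{\gamma(t)}$ in all dimensions at once, I would propagate a basis $e_1,\dots,e_n$ of $T_pM$ by the same equation, $V_i(t):=T_p\Phi^Y_t(e_i)$, which at interior points frames $T_{\gamma(t)}M$. From $\frac{d}{dt}X(\gamma(t))=(DY)x+[Y,X]_{\gamma(t)}$ and $\dot V_i=(DY)V_i$, writing $x=\sum_i a_iV_i$ makes the $DY$-terms cancel, leaving $\sum_i\dot a_i\,V_i=[Y,X]_{\gamma(t)}$. Thus the velocity of the coordinate vector $a(t)\in\RR^n$ is the frame-expression of $[Y,X]_{\gamma(t)}$. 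The hypothesis $[X,Y]\wedge X=0$ says exactly that $[Y,X]_{\gamma(t)}$ is proportional to $x(t)$, i.e. that $\dot a(t)$ is proportional to $a(t)$.

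On any subinterval where $x\neq0$ this reads $\dot a=h(t)a$ with $h$ continuous (there $[X,Y]=fX$ and $h=-f$), so $a(t)=a(0)\exp\int_0^t h$ is a positive multiple of $a(0)$. Since $V$ has the constant coordinates $a(0)$, this gives $V(t)=c(t)\,X_{\gamma(t)}$ with $c(t)=\exp\int_0^t f(\gamma)\,ds>0$: this is (a) on such intervals, its proportionality statement is (b), and (b) in turn identifies $\Phi^Y_t(\text{integral curve})$ as a curve everywhere tangent to $X$, hence again an integral curve of $X$. Part (c) is then the claim that the $Y$-orbit of a zero of $X$ stays in $\Z X$, equivalently that $\{X\neq0\}$ is positively invariant.

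The crux is the passage through $\Z X$. The computation above controls $c$ only while $\gamma$ avoids zeros, and $f$ may a priori blow up as $\gamma(t)$ approaches $\Z X$; the pointwise identity by itself does not forbid a $Y$-orbit from crossing $\Z X$, and such a crossing would destroy both the positivity in (a) and the invariance in (c). Securing them amounts to showing that $[X,Y]$ vanishes on $\Z X$ along the relevant orbits, so that the coordinate equation becomes the homogeneous $\dot a=h\,a$ and the condition $a\equiv0$ (respectively $a\neq0$) propagates by uniqueness. I expect this zero-set analysis --- carried out by linearizing the inward field $Y$ at a zero of $X$ and exploiting the isomorphism property of $T_p\Phi^Y_t$, rather than the bracket identity alone --- to be the main difficulty.
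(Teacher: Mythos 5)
Your computation on the locus where $X\ne 0$ is correct and is, after unwinding, the same as the paper's: the paper works in a flowbox for $Y$, in which $T\Phi^Y_t$ is the identity, so your propagated frame $V_i(t)$ is constant and your relation $\sum_i\dot a_i\,V_i=[Y,X]_{\gamma(t)}$ becomes the paper's scalar equation $\frac{d}{dt}X_{p(t)}=-f(p(t))\,X_{p(t)}$ with solution $e^{-\int_0^tf}X_{p(0)}$. Up to this change of clothing the two arguments coincide, and both establish (a) for small $t$ at points where $X_p\ne0$ and $Y_p\ne0$.

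The difficulty you flag --- the passage of a $Y$-orbit through $\Z X$ --- is genuine, and you are right not to wave it away: the hypothesis $[X,Y]\wedge X=0$ imposes no condition at all on $[X,Y]$ at points of $\Z X$, and the function $f$ with $[X,Y]=fX$ is defined and continuous only off $\Z X$ and may blow up there. You should know, however, that the paper disposes of this point with the single phrase ``by continuity it suffices to prove this when $Y_p\ne 0$, $X_p\ne 0$,'' and that this does not work: the positive scalar $c$ produced off $\Z X$ need not stay bounded away from $0$ and $\infty$ as one approaches $\Z X$, so positivity of $c$ does not pass to the limit. Indeed the proposition is false as literally stated: on $M=\R 2$ take $X=x\,\pd y$ and $Y=\pd x$. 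Then $[X,Y]=-\pd y$, so $[X,Y]\wedge X=-x\left(\pd y\wedge\pd y\right)=0$, yet $\Z X=\{x=0\}$ is carried off itself by $\Phi^Y_t$, so (c) fails; and (a) fails at the origin, where $T\Phi^Y_t$ is the identity and sends $X_{(0,0)}=0$ to $0$, which is not a positive multiple of $X_{(t,0)}\ne0$. What saves the paper's applications is that there one actually has $[X,Y]=fX$ with $f$ continuous (in fact constant) on all of $M$; under that stronger hypothesis your frame argument closes immediately, since $\dot a=-f(\gamma(t))\,a$ is then a linear ODE with continuous coefficient on the whole time interval, and uniqueness propagates both $a\equiv0$ and $a(t)\ne0$. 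So your proof is incomplete exactly where you say it is, but the missing step cannot be supplied from the stated hypothesis; it requires replacing $[X,Y]\wedge X=0$ by the global factorization $[X,Y]=fX$ with $f$ continuous on $M$.
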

\begin{proof} 
Let $\tilde X$ and $\tilde Y\in\V^1 (\tilde M)$ be
   extensions of   $X$ and $Y$, respectively.   It suffices to prove: 

\paragraph{(*)} 
{\em  If
 $p\in M,\ t\ge 0$ and  $p(t):= \Phi_t^{\tilde X} (p)$, then the linear map
\[T_p\Phi_t^{ \tilde Y}\co T_p \tilde M\to T_{p(t)}\tilde M\] sends $X_p$
to a positive scalar multiple of $X_{p(t)}$.}

\medskip
 By continuity it suffices to prove this  when $Y_p\ne 0$, $X_p\ne 0$
 and $|t|$ sufficiently small.  Working in flowbox coordinates $(u_j)$
 for $\tilde Y$ in a neighborhood of $p$, we assume $\tilde M$ is an
 open set in $ \R n$, \, $\tilde Y=\pde{~}{u_1}$, and $\tilde X$ has no zeros.
 Because $[X, Y]\wedge X=0$, there is a unique continuous map $f\co
 M\to \RR$ such that $[X, Y]=fX$.  Since $\tilde Y$ is a constant
 vector field, the
 vector-valued function $t\mapsto X_{p(t)}$ satisfies
\[
\ode {X_{p(t)}} t  = -f(p(t))\cdot X_{p(t)},
\]
whose solution is 
\[
X_{p (t)}= e^{-\int_0^t f(s)ds}\cdot X_{p(0)}.
\]
This  implies (*).
\end{proof}

The
 following fact is somewhat surprising because $T^{\msf{in}}_pM$ need
 not be convex in $T_pM$:  

\begin{proposition}           \mylabel{th:convex}
$\V^{\msf{L}}_{\msf{in}}(M)$ is a convex cone in $\V  (M)$.
\end{proposition}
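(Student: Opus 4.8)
The plan is to verify the two defining properties of a convex cone separately: stability under multiplication by positive scalars, and stability under addition. The first is immediate. If $X\in\V^{\msf{L}}_{\msf{in}}(M)$ and $c>0$, then $cX$ is again locally Lipschitz, its values lie in $T^{\msf{in}}(M)$ because each cone $T^{\msf{in}}_p M$ is invariant under positive scaling (reparametrize the defining curve by $s\mapsto\gamma(cs)$), and $cX$ generates the time-rescaled semiflow $\Phi^{cX}_t=\Phi^X_{ct}$, which is again a unique local semiflow on $M$. The zero field is trivially inward, so it remains only to show that $X+Y\in\V^{\msf{L}}_{\msf{in}}(M)$ whenever $X,Y\in\V^{\msf{L}}_{\msf{in}}(M)$; this is where the non-convexity of the pointwise cones $T^{\msf{in}}_p M$ must be circumvented.

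For the sum I would argue dynamically rather than pointwise, thereby sidestepping convexity altogether. Since $X$ and $Y$ are locally Lipschitz, so is $X+Y$, and (after extending it to a neighborhood of $M$ in $\tilde M$) it generates a unique local flow; the only thing to prove is that $M$ is positively invariant under this flow. For then $X+Y$ restricts to a unique local semiflow on $M$, its generator automatically takes values in $T^{\msf{in}}(M)$ (the forward trajectory through $p$ is a curve in $M$ with velocity $(X+Y)_p$ at $p$), and therefore $X+Y\in\V_{\msf{in}}(M)$. Positive invariance is obtained from the Lie--Trotter product formula: for each $p\in M$ and each sufficiently small $t\ge 0$,
\[
 \Phi^{X+Y}_t(p)=\lim_{n\to\infty}\bigl(\Phi^{Y}_{t/n}\circ\Phi^{X}_{t/n}\bigr)^{n}(p).
\]
Each factor $\Phi^X_{t/n}$ and $\Phi^Y_{t/n}$ carries points of $M$ to points of $M$, since by definition $\Phi^X$ and $\Phi^Y$ are local semiflows on $M$ itself; hence every approximant $\bigl(\Phi^{Y}_{t/n}\circ\Phi^{X}_{t/n}\bigr)^{n}(p)$ lies in $M$. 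As $M$ is locally closed in $\tilde M$ and, for small $t$, the whole computation takes place in a fixed compact neighborhood of $p$, the limit $\Phi^{X+Y}_t(p)$ again lies in $M$. This yields positive invariance and completes the sum step.

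The hard part will be making the Lie--Trotter step rigorous in exactly this setting: the product formula must be justified for \emph{locally Lipschitz} generators and for \emph{semiflows with nonsmooth boundary}, with uniform control of the domains of definition so that, for $t$ small and $n$ large, every intermediate point $\Phi^X_{t/n}(\cdots)$ and $\Phi^Y_{t/n}(\cdots)$ actually lies in the common domain and stays within a fixed compact set. Working in a coordinate chart of $\tilde M$ around $p$, this reduces to the standard Chernoff/Lie--Trotter convergence for Lipschitz vector fields together with an equicontinuity estimate for the composed maps; the local closedness of $M$ near $\p M$, needed so that the limit of the $M$-valued approximants remains in $M$, follows from $M$ being a topological manifold with boundary embedded in $\tilde M$.

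Finally, it is worth recording the conceptual reason the non-convexity is harmless, which also yields an alternative proof. Although the contingent cone $T^{\msf{in}}_p M$ need not be convex, continuity forces an inward field to take values in the smaller \emph{Clarke tangent cone}, which is the Kuratowski lower limit $\liminf_{q\to p}T^{\msf{in}}_q M$ and is always convex: for any sequence $q_n\to p$ in $M$ one has $X_{q_n}\in T^{\msf{in}}_{q_n}M$ and $X_{q_n}\to X_p$, so $X_p$ lies in that lower limit. Hence $X_p$ and $Y_p$ lie in a common convex cone contained in $T^{\msf{in}}_p M$, their sum lies there as well, and in particular $(X+Y)_p\in T^{\msf{in}}_p M$; combined with Nagumo's viability theorem and Lipschitz uniqueness this re-proves that $X+Y$ is inward.
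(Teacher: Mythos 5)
Your argument is correct and is essentially the paper's own proof: the paper likewise reduces the statement to closure under addition, extends $X$ and $Y$ to Lipschitz fields on a neighborhood of $M$ in $\tilde M$, and obtains positive invariance of $M$ under the flow of $X+Y$ from the Lie--Trotter product formula together with the fact that $M$ is relatively closed in $\tilde M$. The product-formula step you flag as the ``hard part'' is exactly what the paper outsources to a theorem of Nelson for Lipschitz fields (after localizing to a halfplane chart and using a Lipschitz extension theorem); your closing remark via the Clarke tangent cone and Nagumo's viability theorem is an alternative route the paper does not take.
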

\begin{proof} 
As $\V^{\msf{L}}(M)$  is a convex cone in $\V(M)$, it
  suffices to show that $\V^{\msf{L}}_{\msf{in}} (M)$ is 
  closed under addition.  Let  $X, Y\in \V^{\msf{L}}(M)$.  
We need  to prove: 
\begin{equation}                \label{eq:ppM}
\mbox{\em If $p\in \p M$ there exists $\eps >0$ such that $0\le t\le \eps
  \implies \Phi_t^{ X + Y} (p)\in M$.}
\end{equation}
This is easily reduced to a local result, hence we assume $M$ is
relatively open in the closed halfplane $\RR \times [0,\infty)$ and
  $X, Y$ are Lipschitz vector rields on $M$.  Let $\tilde X, \tilde Y$
  be extensions of $X, Y$ to Lipschitz vector fields on an open
  neighborhood $\tilde M\subset \R 2$ of $M$ ( Johnson {\em et
    al.\ }\cite {Lindenstrauss86}).  Denote the local flows of $\tilde
  X, \, \tilde Y, \, \tilde X+ \tilde Y$ respectively by $\{f_t\},
  \{g_t\}, \{h_t\}, \ (t\in \RR)$.  We use a special case of Nelson
  \cite[Th.\,1, Sec.\,4]{Nelson70}:
\begin{proposition}             \label{th:nelson}
For every $p\in \tilde M$ there exists $\eps >0$ and a neighborhood $W\subset
\tilde M$
of $p$  such that
\[h_t (x)=\lim_{k\to\infty}\left(
f_{t/k}\circ g_{t/k}\right)^k (x)
\]
  uniformly for $x\in W$ and  $|t|<\eps$.
\end{proposition}
Because $X$ and $Y$ are inward, $M$ is positively invariant under the
local semiflows $\{f_t\}_{t\ge 0}$ and $\{ g_t\}_{t\ge 0}$.  Therefore
\[
0\le t\le \eps \implies \left(f_{t/k}\circ g_{t/k}\right)^k \in M,
\qquad (k\in \Np).
\]
As $M$ is relatively  closed in $\tilde M$, Proposition
\ref{th:nelson} implies $h_t (x)\in M$ for $0\le t\le \eps$, 
which yields (\ref{eq:ppM}).
\end{proof}

Examination of the proof yields:
\begin{corollary}               \mylabel{th:convexcor}
If $L$ is a closed subset of a smooth manifold $N$, the set of locally
Lipschitz vector fields on $N$ for which $L$ is positively invariant
is a convex cone. \qed
\end{corollary}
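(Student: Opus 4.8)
The plan is to check directly that $\mcal C$, the set of locally Lipschitz vector fields on $N$ leaving the closed set $L$ positively invariant, is closed under multiplication by positive scalars and under addition; together these give a convex cone (the zero field lies in $\mcal C$ trivially, since its flow is the identity). The guiding observation is that the proof of Proposition \ref{th:convex} never used the half-plane structure of $M$, only that $M$ was closed and positively invariant under the two constituent flows, so that argument should transfer with $L$ in the role of $M$.

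Closure under positive scalars is immediate. If $X \in \mcal C$ and $\lambda > 0$, the local flow of $\lambda X$ is the time reparametrization $t \mapsto \Phi^X_{\lambda t}$ of the flow of $X$; since $\lambda t \geq 0$ whenever $t \geq 0$, the forward trajectories of $\lambda X$ coincide set-theoretically with those of $X$, so positive invariance of $L$ is preserved.

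For closure under addition, fix $X, Y \in \mcal C$ and write $\{f_t\}, \{g_t\}, \{h_t\}$ for the local flows of $X,\, Y,\, X+Y$. Let $p \in L$. Passing to a chart about $p$ in order to apply Proposition \ref{th:nelson}, I obtain $\eps > 0$ and a neighborhood $W$ of $p$ on which $h_t(x) = \lim_{k\to\infty}(f_{t/k}\circ g_{t/k})^k(x)$ uniformly for $|t| < \eps$. For $x \in L \cap W$ and $0 \leq t < \eps$, the time $t/k$ is nonnegative, so positive invariance gives $g_{t/k}(x) \in L$, then $f_{t/k}(g_{t/k}(x)) \in L$, and inductively $(f_{t/k}\circ g_{t/k})^k(x) \in L$ for every $k$; because $L$ is closed, the limit $h_t(x)$ lies in $L$. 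This yields positive invariance of $L$ under $\{h_t\}$ locally, near each point of $L$ and for small forward times.

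It remains to upgrade this to positive invariance for all admissible $t \geq 0$, which I would handle by a continuation argument. Given $x \in L$ and $T > 0$ with $x \in \mcal D(h_T)$, the set $A := \{t \in [0,T] : h_s(x) \in L \text{ for } 0 \leq s \leq t\}$ is nonempty and, by continuity of $s \mapsto h_s(x)$ and closedness of $L$, closed; at $t^* := \sup A$ one has $h_{t^*}(x) \in L$, and applying the local result at $h_{t^*}(x)$ together with the semigroup identity $h_{t^*+s} = h_s \circ h_{t^*}$ shows $h_u(x) \in L$ just beyond $t^*$, forcing $t^* = T$. Hence $X + Y \in \mcal C$. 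The only delicate points are the localization to a chart required by Proposition \ref{th:nelson} and the verification that the continuation argument sweeps out the entire forward orbit; neither is a real obstacle once $L$ is known to be closed, which is precisely why the corollary follows from inspecting the earlier proof.
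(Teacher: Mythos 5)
Your proposal is correct and follows essentially the same route as the paper, which derives the corollary precisely by observing that the Nelson product-formula argument in the proof of Proposition \ref{th:convex} uses only that the invariant set is closed and positively invariant under both constituent flows. The small-time-to-all-time continuation step you add is a standard detail the paper leaves implicit.
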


\begin{question} Is $\V_{\msf{in}}(M)$ is a convex cone in $\V (M)$?
\end{question}

\section{Index functions}   \mylabel{sec:index}
 We review properties of the  fixed point index $I (f)$
 defined  by the late Professor Albrecht Dold (\cite {Dold65, Dold72}).
 Using it we define an {\em equilibrium index} $I_K (\Phi)$ for local semiflows,
 and a {\em vector field index} $\msf i_K (X)$ for inward vector
 fields.  
\subsection{The fixed point index for maps}   \mylabel{sec:fpindex}

\paragraph{Dold's Hypothesis:}
\begin{itemize}
\item $V$ is an open set in a topological space $S$.

\item  $f\co V \to
 S$ is a continuous map with compact fixed point set \,$\Fix
f\subset V$.  

\item $V$ is a  Euclidean neighborhood retract (ENR).\footnote{
This means $V$ is homeomorphic to a retract of an open subset of
  some Euclidean space.   Polyhedra and connected metrizable 
  manifolds are ENRs.}
\end{itemize}

On the class  of maps satisfying these conditions, Dold
 constructs an integer-valued {\em fixed point index}
 denoted here by $I (f)$, uniquely characterized by the
following five properties (see \cite [VII.5.17, Ex.\,5*]{Dold72}):
\begin{description} 

\item[(FP1)]  $I(f)=I(f|V_0)$\, if $V_0\subset V$ is an open
  neighborhood of   $\Fix f$.  

\item[(FP2)]    $I(f)=\begin{cases}
& 0 \ \mbox{if   $\Fix f =\empty$,}\\
& 1 \ \mbox{if   $f$ is constant.}
\end{cases}
$

\item[(FP3)]    $I(f)=\sum_{i=1}^m I(f| V_i)$\, if $V$ is the disjoint
  union of $m$ open sets $V_i$.

\item[(FP4)]  
   $I (f\times g)=I (f)\cdot I (g)$.

\item[(FP5)]    $I(f_0)=I(f_1)$\, if there is a homotopy
  $f_t\co V\to S,\, (0\le t \le 1)$\, such that $\bigcup_t\Fix{f_t}$ is
  compact.
\end{description}
These correspond to (5.5.11) ---
  (5.5.15) in \cite[Chap.\,VII]{Dold72}.   

In addition:
\begin{description} 
\item[(FP6)] If $f$ is $C^1$ and $\Fix f$ is an isolated   fixed
  point $p$, then 
\[I(f)= (-1)^\nu
\]
where $\nu$ is the number of eigenvalues $\lam$ of $f$ such that $\lam
>1$, ignoring multiplicities  \,(\cite[VII.5.17, Ex. 3]{Dold72}).  
\end{description} 

\begin{description}

\item[(FP7)]  
If $S$ is an ENR  and $f\co S\to S$ is homotopic to the
identity map, then
\[
 I(f)= \chi (S).
\]
See Dold \cite[VII.6.22]{Dold72}.
\end{description} 

%
\begin{lemma}           \mylabel{th:gnearf}
If\, $g$\, is sufficiently close to $f$ in the compact
open topology, then $\Fix g$ is compact and $I(g)=I(f)$. 
\end{lemma}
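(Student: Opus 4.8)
The plan is to deduce the lemma from the homotopy and localization axioms \textup{(FP1)} and \textup{(FP5)}, the only issue being that $S$ is a bare topological space, so there is no ambient straight-line homotopy joining $f$ to a nearby $g$. I would repair this using the $\mathrm{ENR}$ structure of $V$: homotope inside a Euclidean model of $V$ and then retract. First I would record two consequences of $V$ being an $\mathrm{ENR}$, namely that $V$ is metrizable and locally compact (it is a closed, hence locally compact, retract of an open $O\subset\R n$, and a subspace of a metric space). Fix a metric $d$ on $V$. Since $\Fix f$ is compact, lies in the open set $V$, and is fixed by $f$, I choose a precompact open $N$ with $\Fix f\subset N$, with $\overline N\subset V$ compact, and with $f(\overline N)\subset V$ (possible by continuity, since each $p\in\Fix f$ has $f(p)=p\in V$). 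On the compact frontier $C:=\overline N\setminus N$ we have $f(x)\ne x$, so $d(f(x),x)\ge\delta>0$ for some $\delta$. By \textup{(FP1)}, $I(f)=I(f|N)$, so it suffices to compare $f|N$ with $g|N$.

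Next I would build the homotopy. Write $V$ as a retract of an open $O\subset\R n$ with inclusion $\iota\colon V\hookrightarrow O$ and retraction $r\colon O\to V$, $r\iota=\mathrm{id}_V$, and identify $V$ with $\iota(V)$. For $g$ uniformly close to $f$ on the compact set $\overline N$ we also have $g(\overline N)\subset V$ (as $f(\overline N)$ is a compact subset of the open set $V$), and, since $f(\overline N)\subset\iota(V)\subset O$ is compact, the segment $s\mapsto(1-s)\,\iota f(x)+s\,\iota g(x)$ stays in $O$. I then set $F_s:=r\bigl((1-s)\,\iota f+s\,\iota g\bigr)\colon\overline N\to V$, a homotopy with $F_0=r\iota f=f$ and $F_1=r\iota g=g$ on $\overline N$. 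Using uniform continuity of $r$ near the compact set $f(C)$ together with the uniform closeness of $g$ to $f$, I can guarantee that $F_s(x)$ stays $d$-close to $f(x)$ uniformly in $(x,s)$; in particular, shrinking the compact-open neighborhood of $f$ if necessary, $d(F_s(x),f(x))<\delta$ for $x\in C$, whence $F_s(x)\ne x$ for all $x\in C$ and all $s\in[0,1]$.

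It remains to harvest the conclusions. The set $\{(x,s)\in\overline N\times[0,1]\co F_s(x)=x\}$ is closed in the compact product $\overline N\times[0,1]$, hence compact, and by the previous step it is disjoint from $C\times[0,1]$; therefore its projection $\bigcup_s\Fix{F_s}$ is a compact subset of $N$. In particular $\Fix g\cap\overline N=\Fix{F_1}\subset N$ is compact with no fixed points on $\partial N$; when the compact-open neighborhood is taken to control $g$ on a precompact isolating neighborhood (the situation for the inward fields treated here, whose isolating sets $U$ are precompact), this accounts for all of $\Fix g$, so $\Fix g$ is compact. Finally, applying \textup{(FP5)} to $F\colon N\times[0,1]\to S$, whose union of fixed point sets is the compact set just described, gives $I(f|N)=I(g|N)$, and two uses of \textup{(FP1)} yield $I(f)=I(f|N)=I(g|N)=I(g)$.

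The hard part is not the axiomatic bookkeeping but the construction of a legitimate homotopy $F_s$ in the topological codomain $S$: a naive convex combination of $f$ and $g$ is meaningless there. The $\mathrm{ENR}$ hypothesis on $V$ is exactly what dissolves this obstacle, by transporting the straight-line homotopy into the Euclidean model $O$ and pushing it back with the retraction $r$. The only other delicate point is keeping fixed points from escaping across $\partial N$ (and ensuring $\Fix g$ is genuinely compact), which is handled by the uniform gap $d(f(x),x)\ge\delta$ on the compact frontier $C$ together with the precompactness of the isolating neighborhood.
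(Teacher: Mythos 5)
Your proof is correct and follows essentially the same route as the paper's: realize the ENR $V$ as a retract of an open set in Euclidean space, push the straight-line homotopy from $f$ to $g$ back into $V$ with the retraction, check that the union of the fixed-point sets of the homotopy is compact, and invoke (FP5). Your extra localization to a precompact neighborhood $N$ via (FP1) and the $\delta$-gap on its frontier only spells out the compactness claim that the paper asserts without detail.
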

\begin{proof}  We can assume $\rho\co W\to V$ is a retraction, where
  $W\subset \R   m$ is an open set containing $V$.  For $g$ sufficiently close to $f$ the following hold:
  $W$ contains the line segment (or point) spanned by $\{f(p), g(p)\}$
  for every $p\in V$,   and the maps 
\[
f_t\co (t,p)\mapsto \rho((1-t)f(p) + t g(p)),\qquad (0\le t\le 1,\quad p\in V)
\]
constitute a homotopy in $V$
from $f_0=f$ to $f_1=g$ with $\bigcup_t\Fix{f_t}$ is
  compact.  Therefore the conclusion follows from  (FP5).
\end{proof}

\subsection{The equilibrium index for local semiflows}
Let $\Phi:=\{\Phi_t\}_{t\ge 0}$ be a local semiflow in a topological
space $\mcal C$, with {\em equilibrium set}
\[
\mcal E (\Phi):={\textstyle\bigcap}_{t\ge 0}\Fix{\Phi_t}. 
\] 
 $K\subset \mcal E (\Phi)$ is a {\em block} if $K$ is compact and
has an open, precompact ENR neighborhood $V\subset \mcal C$ such that $\ov
V\cap\mcal E (\Phi)\subset V$.  Such a $V$ is an {\em
  isolating neighborhood} for $K$.  With these assumptions on $\Phi$
and $V$ we have:
\begin{lemma}           \mylabel{th:tau}
There exists $\tau >0$ such that the
following hold when $0  <t \le \tau$:

\begin{description}

\item[(a)] $\Fix {\Phi_t}\cap V$ is  compact,

\item[(b)] $I (\Phi_t|V)=I (\Phi_\tau|V)$.

\end{description}
\end{lemma}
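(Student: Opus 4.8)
Here is a plan for proving Lemma~\ref{th:tau}.

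\medskip
\noindent\textbf{Proof strategy.}
The plan is to choose $\tau$ so small that $\Phi_t|V$ satisfies Dold's Hypothesis for every $t\in(0,\tau]$, and then to extract (b) from the homotopy invariance (FP5) of the index. Two preliminary facts will be needed. First, since $\ov V$ is compact and the domain $\{(t,p)\co p\in\mcal D(\Phi_t)\}$ of the semiflow is open and contains $\{0\}\times\ov V$, a tube-lemma argument will produce $\tau_1>0$ with $\ov V\subset\mcal D(\Phi_t)$ for $0\le t\le\tau_1$; thus $\Phi_t|V\co V\to\mcal C$ is defined and continuous for such $t$. Second, and this is the crux, I claim there is $\tau_0>0$ such that $\Phi_t$ has no fixed point in $\ov V\setminus V$ whenever $0<t\le\tau_0$.

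To establish the claim I would argue by contradiction: a failure yields $t_n\downarrow 0$ and points $p_n\in(\ov V\setminus V)\cap\Fix{\Phi_{t_n}}$, and after passing to a subsequence $p_n\to p\in\ov V\setminus V$ by compactness of $\ov V$. The aim is to show $p\in\mcal E(\Phi)$, which contradicts the isolating condition $\ov V\cap\mcal E(\Phi)\subset V$. For fixed $s\ge 0$, write $s=k_n t_n+r_n$ with $k_n\in\Np$ and $0\le r_n<t_n$; since $p_n$ is fixed by $\Phi_{t_n}$ it is fixed by $\Phi_{k_n t_n}$, and the semigroup law together with the monotonicity of domains gives $\Phi_s(p_n)=\Phi_{r_n}(p_n)$. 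As $n\to\infty$ we have $r_n\to 0$ and $p_n\to p$, so joint continuity at $(0,p)$ forces $\Phi_{r_n}(p_n)\to\Phi_0(p)=p$. Consequently, whenever $p\in\mcal D(\Phi_s)$ one also has $\Phi_s(p_n)\to\Phi_s(p)$, whence $\Phi_s(p)=p$. A short bootstrap then finishes the claim: local existence places $p$ in some $\mcal D(\Phi_\delta)$ with $\delta>0$, the previous line gives $\Phi_\delta(p)=p$, and iterating the semigroup law shows the forward orbit of $p$ is defined for all $s\ge 0$ and is constant, i.e.\ $p\in\mcal E(\Phi)$.

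Granting the claim, set $\tau:=\min\{\tau_0,\tau_1\}$. For $0<t\le\tau$ the set $\Fix{\Phi_t}\cap\ov V$ is closed in the compact set $\ov V$ and misses $\ov V\setminus V$, so it is a compact subset of $V$ equal to $\Fix{\Phi_t|V}$; this is assertion (a), and it confirms that $\Phi_t|V$ satisfies Dold's Hypothesis, so $I(\Phi_t|V)$ is defined. For (b), fix $t\in(0,\tau]$ and consider the homotopy $[t,\tau]\times V\to\mcal C$, $(s,q)\mapsto\Phi_s(q)$, from $\Phi_t|V$ to $\Phi_\tau|V$. The set $B:=\{(s,q)\in[t,\tau]\times\ov V\co\Phi_s(q)=q\}$ is closed in the compact set $[t,\tau]\times\ov V$, hence compact, so its projection $A\subset\ov V$ is compact; repeating the limiting argument of the claim along $[t,\tau]$ shows $A\subset V$, and $A$ is exactly $\bigcup_{s\in[t,\tau]}\Fix{\Phi_s|V}$. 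Since this union of fixed-point sets is compact, (FP5) yields $I(\Phi_t|V)=I(\Phi_\tau|V)$.

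I expect the crux claim to be the main obstacle: upgrading the accumulation point $p$ of short-period points into a genuine equilibrium. The delicate issues are the bookkeeping of the local-semiflow domains (legitimizing $\Phi_s(p_n)=\Phi_{r_n}(p_n)$, and guaranteeing that $p$ eventually lies in $\mcal D(\Phi_s)$ for every $s$) and the verification that the constant forward orbit through $p$ really extends to all $s\ge 0$. Everything else is compactness packaged with Dold's axioms.
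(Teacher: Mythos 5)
Your proof is correct and follows essentially the same route as the paper: a contradiction argument in which fixed points of $\Phi_{t_n}$ with $t_n\downarrow 0$ accumulate at a boundary point of $V$ that is then shown to lie in $\mcal E(\Phi)$, violating the isolating property, followed by an application of (FP5) for part (b). The paper compresses the key step into the phrase ``joint continuity of $(t,x)\mapsto\Phi_t(x)$,'' whereas you supply the division-with-remainder/semigroup argument (and the domain bookkeeping) that justifies it; the two proofs are otherwise the same.
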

\begin{proof}
If (a) fails, there are convergent sequences $\{t_k\}$ in  $[0,\infty)$
and  $\{p_k\}$ in $V$   such that
\[
 t_k\searrow 0, \quad
p_k\in \Fix{\Phi_{t_k}}\cap V, \quad p_k\to q\in \msf{bd} (V).
\]       
Joint continuity of $(t,x)\mapsto \Phi_t (x)$ yields the contradiction
$q\in \mcal E (\Phi) \cap \msf{bd} (V)$.  Assertion (b) is a
consequence of (a) and (FP5).
\end{proof}

It follows that the fixed point index $I
(\Phi_\tau|V)$ depends only on $\Phi$ and $K$, and is the same for all
isolating neighborhoods $V$ of $K$. 
\begin{definition}              \mylabel{th:defeqindex}
Let $\tau>0$ be as in Lemma \ref{th:tau}(b).  We call $I(\Phi_\tau|V)$
the {\em equilibrium index} of $\Phi$ in $V$, and at $K$, denoted by
$\msf i(\Phi,V)$ and $\msf i_K (\Phi)$.
\end{definition}

\subsection{The vector field index for inward vector fields} 
In the rest of this section the manifolds $\tilde M$ and $M\subset
\tilde M$ are as in Hypothesis \ref{th:hypmain}, $K$ is a block of
zeros for $X$ for $X\in\V_{\msf{in}} (M)$, \ and $U$ an isolating
neighborhood for $(X, K)$.  Then $K$ is also a block of equilibria for
the local semiflow $\Phi^X$, and the equilibrium index $\msf i (\Phi^X,U)$
is defined in Definition \ref{th:defeqindex}.
\begin{definition}             \mylabel{th:defindex}
The {\em vector field index of $X$ in $U$} (and\, {\em at $K$})  is  
\[\begin{split}
 \msf i (X, U)=\msf i_K (X) & :=  \msf i (\Phi^X, U).
\end{split}
\]
$K$ is {\em essential} (for $X$) \ if $\msf i_K (X)\ne 0$, and {\em
  inessential} otherwise. 
\end{definition}

Two vector fields $X_j\in \V_{\msf{in}} (M_j), \,j=1,2$ have {\em
  isomorphic germs} 
at $K_j\subset M_j$ provided there are open neighborhoods $U_j\subset
M_j$ of $X_j$ and  a homeomorphism $U_1\approx U_2$ conjugating $\Phi^{X_1}|U_1
$ to $\Phi^{X_2}|U_2$.

\begin{proposition}             \mylabel{th:index}
The vector field index has the following properties:
\begin{description}

\item[(VF1)] $i_K (X)=\msf i_{K'} (X')$\, if $K'$ is a block of zeros
  for $X'\in \V_{\msf{in}} (M')$ and the germs of $X$ at $K$ and $X'$
  at $K'$ are isomomorphic.

\item[(VF2)]   If $\msf i (X, U)\ne 0$ then $\Z X\cap
  U\ne\varnothing$.

\item[(VF3)] $\msf i (X, U)=\sum_{j=1}^m \msf i (X, U_j)$ provided $U$ is the
  union of disjoint  open sets $U_1,\dots,U_m$. 

\item[(VF4)] If $Y$ is sufficiently close to $X$ in $\V_{\msf{in}}
  (M)$, then $U$ is isolating for $Y$ and $\msf i (X, U)=\msf i
  (Y, U)$.

\item[(VF5)] $\msf i_K (X)$  equals the
Poincar\'e-Hopf index  $\msf i^{\rm PH}_K (X)$ provided  $K\cap\p M=\empty$. 

\end{description}
\end{proposition}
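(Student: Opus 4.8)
The plan is to read off (VF1)--(VF4) directly from the axioms (FP1)--(FP5) together with continuous dependence of the semiflow on the field, treating (VF5), the comparison with the Poincar\'e--Hopf index, as the only substantive assertion. For (VF2), the equilibria of $\Phi^X$ in $U$ are exactly the points of $\Z X\cap U$; if this set is empty, the argument proving Lemma~\ref{th:tau}(a) shows $\Fix{\Phi^X_\tau|U}=\varnothing$ for all small $\tau>0$, so $\msf i(X,U)=I(\Phi^X_\tau|U)=0$ by (FP2), which is the contrapositive of (VF2). For (VF3), when $U=U_1\sqcup\cdots\sqcup U_m$ the map $\Phi^X_\tau|U$ is the disjoint union of its restrictions $\Phi^X_\tau|U_j$, so additivity is immediate from (FP3). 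For (VF1), an isomorphism of germs is a homeomorphism $h$ of isolating neighborhoods with $\Phi^{X'}_\tau=h\circ\Phi^X_\tau\circ h^{-1}$; since Dold's index is invariant under conjugation by a homeomorphism of ENRs, the two indices agree (one may use the same $\tau$ for both, as each index is independent of small $\tau$ by Lemma~\ref{th:tau}). For (VF4), I would first note that $X$ is zero-free on the compact set $\msf{bd}(U)$, so any $Y$ close enough to $X$ is also zero-free there and $U$ is isolating for $Y$; uniqueness of the semiflow generated by an inward field forces $\Phi^Y_\tau\to\Phi^X_\tau$ uniformly on $\ov U$ as $Y\to X$ (a limit of trajectories of the $Y$'s would solve the $X$-equation), so Lemma~\ref{th:gnearf} gives $I(\Phi^Y_\tau|U)=I(\Phi^X_\tau|U)$.

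For (VF5) I would assume $\ov U\cap\p M=\varnothing$, so that $\ov U$ lies in the interior where $X$ generates a local flow, and work in a chart identifying a neighborhood of $\ov U$ with an open set in $\R n$. The key is to read both indices as Brouwer degrees on $\msf{bd}(U)$. On the one hand, the Poincar\'e--Hopf index of the block equals the boundary degree $\deg(X,\msf{bd}(U),0)$: the homotopy-to-finite-zeros defining $\msf i^{\rm PH}_K(X)$ leaves the nonvanishing boundary map unchanged up to homotopy, and the boundary degree of a field with isolated zeros is the sum of the local degrees $\msf i_pX$. On the other hand, $\msf i_K(X)=I(\Phi^X_\tau|U)=\deg(\mathrm{id}-\Phi^X_\tau,\msf{bd}(U),0)$. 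Since $\tfrac1\tau\bigl(\Phi^X_\tau(x)-x\bigr)=\tfrac1\tau\int_0^\tau X(\Phi^X_s(x))\,ds\to X(x)$ uniformly on $\ov U$ as $\tau\downarrow0$, and $X$ is nonzero on $\msf{bd}(U)$, the straight-line homotopy shows $\deg(\mathrm{id}-\Phi^X_\tau,\msf{bd}(U),0)=\deg(-X,\msf{bd}(U),0)=(-1)^n\deg(X,\msf{bd}(U),0)$ for all small $\tau$. Thus $\msf i_K(X)=(-1)^n\,\msf i^{\rm PH}_K(X)$, and because $M$ is a surface, $(-1)^n=1$ and the two indices coincide.

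The step I expect to be the main obstacle is (VF5). The delicate points are the uniform convergence $\tfrac1\tau(\Phi^X_\tau-\mathrm{id})\to X$ on $\ov U$ (needed so that the straight-line homotopy avoids the origin on $\msf{bd}(U)$ for small $\tau$, using only continuity of $X$ and joint continuity of the flow) and the clean identification of the author's homotopy definition of $\msf i^{\rm PH}_K$ with the boundary degree. The factor $(-1)^n$ is precisely what ties the identity to even dimension; it is harmless for surfaces, but it is the point at which the surface hypothesis genuinely enters.
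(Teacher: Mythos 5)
Your handling of (VF1)--(VF4) is essentially the paper's: each is read off from the corresponding Dold property ((FP1)--(FP3), conjugation/commutativity of the index, and Lemma~\ref{th:gnearf} combined with continuous dependence of $\Phi^Y_\tau$ on $Y$), and the details you supply --- emptiness of $\Fix{\Phi^X_\tau}\cap U$ for small $\tau$ when $\Z X\cap U=\varnothing$, uniform convergence of the time-$\tau$ maps on $\ov U$ --- are exactly what the paper leaves implicit. For (VF5) you take a genuinely different route. The paper uses (VF4) and the homotopy built into the definition of $\msf i^{\rm PH}_K$ to replace $X$ by a field whose zeros in $U$ are finitely many hyperbolic equilibria, localizes by (FP3) to one such $p$, and matches the local Poincar\'e--Hopf index against (FP6) applied to $\Phi_\tau$, whose derivative $e^{\tau dX_p}$ has an eigenvalue $>1$ for each positive eigenvalue of $dX_p$. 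You instead express both quantities as boundary degrees and link them via the straight-line homotopy coming from $\Phi^X_\tau(x)-x=\int_0^\tau X(\Phi^X_s(x))\,ds$; this needs no perturbation and, more importantly, makes the sign explicit: $\deg(\mathrm{id}-\Phi^X_\tau)=\deg(-X)=(-1)^n\deg(X)$, so (VF5) as stated is an even-dimensional fact. That is a real virtue of your version --- the paper's claim that the index of $X$ at a hyperbolic zero is $(-1)^\nu$ (rather than $(-1)^{n-\nu}$) is itself valid only for $n$ even, so the surface hypothesis enters the paper's proof as well, just silently. The one repair your argument needs is at the start: $\ov U$ need not lie in a single chart, so ``reading both indices as Brouwer degrees on $\msf{bd}(U)$'' requires either the intrinsic degree of the section $X$ of $T_UM$ relative to the boundary (the relative Euler class of Proposition~\ref{th:obs}) or a preliminary reduction --- perturb to finitely many zeros and split $U$ using (VF3)/(FP3) so each piece sits in a chart --- which is precisely the paper's first step.
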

\begin{proof}
{~}

{(VF1): }   A consequence of  (FP1).

{(VF2): }   Follows from (FP2).

{(VF3): } Follows from  (FP3).

{(VF4): } Use Lemma \ref{th:gnearf}. 

{(VF5): } Since $X$ can be approximated by locally $C^\infty$ vector fields
transverse to the zero section,  using compactness of $\ov U$ and (VF4) we
assume $\Z X \cap U$ is finite set of hyperbolic equilibria. 
 By
(FP3) we assume $\Z X \cap
U$ is a hyperbolic equilibrium $p$. In this case the
index of $X$ at $p$ is $(-1)^\nu$ where $\nu$ is the
number of positive eigenvalues of $dX_p$ (ignoring
multiplicity).  The conclusion follows from  (FP6) and Definitions
\ref{th:defeqindex},   \ref{th:defindex}.
\end{proof}
 
\begin{proposition}             \mylabel{th:ichi2}
If $M$ is compact, \,$\msf i (X, M)= \chi (M)$.
\end{proposition}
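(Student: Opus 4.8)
The plan is to recognize $\msf i(X,M)$ as the fixed point index of a globally defined self-map of $M$ that is homotopic to the identity, and then to apply property (FP7), which evaluates such an index as $\chi(M)$.

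First I would check that $\Phi^X$ is forward complete, so that $\Phi^X_t$ is a continuous map of all of $M$ into $M$ for every $t\ge 0$. Since $X$ is inward, each forward trajectory stays in $M$; since $M$ is compact, no maximal solution can escape in finite time. Hence every maximal solution of the initial value problem defining $\Phi^X$ exists for all $t\ge 0$, and $\mcal D(\Phi^X_t)=M$.

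Next I would unwind the definitions. Taking $V:=M$ as an isolating neighborhood for the block $K:=\Z X=\mcal E(\Phi^X)$ (legitimate since $\ov M=M$ and $\mcal E(\Phi^X)\subset M$), Definitions \ref{th:defindex} and \ref{th:defeqindex} give $\msf i(X,M)=\msf i(\Phi^X,M)=I(\Phi^X_\tau\,|\,M)=I(\Phi^X_\tau)$ for a suitably small $\tau>0$ as in Lemma \ref{th:tau}. Compactness of $M$ makes $\Fix{\Phi^X_\tau}$ automatically compact, so this index is defined. The map $[0,\tau]\times M\to M,\ (s,p)\mapsto\Phi^X_s(p)$ is continuous by joint continuity of the semiflow and furnishes a homotopy of self-maps of $M$ from $\Phi^X_0=\mathrm{id}_M$ to $\Phi^X_\tau$. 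As $M$ is a connected metrizable manifold it is an ENR, so (FP7) yields $I(\Phi^X_\tau)=\chi(M)$, whence $\msf i(X,M)=\chi(M)$.

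The only step that is not a formal unwinding is the forward completeness of $\Phi^X$: it is precisely here that inwardness (preventing exit through $\p M$) together with compactness (preventing blow-up) is needed, and it is what lets (FP7) be applied to $\Phi^X_\tau$ as a self-map of the \emph{entire} manifold. I expect this to be the main—and essentially the only—point requiring genuine argument.
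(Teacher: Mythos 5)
Your proposal is correct and is exactly the argument the paper intends: its entire proof is ``Follows from (FP7),'' and your write-up simply supplies the implicit details (forward completeness from compactness plus inwardness, $V=M$ as isolating neighborhood, and the semiflow itself as the homotopy from the identity to $\Phi^X_\tau$). No substantive difference in approach.
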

\begin{proof} 
Follows from (FP7).
\end{proof}

\begin{proposition}             \mylabel{th:homsections}
Assume   $X, Y\in \V_{\msf {in}} (M)$  and $U\subset M$ is isolating
for $X$.    Then   
$U$ is isolating for $Y$,  and 
\[
\msf i (X, U) =\msf i
  (Y, U),
\]
 provided  one of the following holds: 
\begin{description}

\item[(i)]   $Y|\msf{bd}(U)$ is  sufficiently close to $X|\msf {bd}  (U)$,

\item[(ii)]
$Y|\msf{bd}(U)$ is nonsingularly  homotopic to $X|\msf{bd}(U)$.
\end{description}
\end{proposition}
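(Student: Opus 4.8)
The plan is to reduce both hypotheses to (ii) and then to deduce the index equality from a single application of the homotopy property (FP5) to the time-$\tau$ maps of a suitable family of inward semiflows; throughout I fix a Riemannian metric, so that $\|\cdot\|$ and the compact-open topology are available. Since $U$ is isolating for $X$, the field $X$ has no zeros on the compact set $\msf{bd}(U)$, so $\delta:=\min_{p\in\msf{bd}(U)}\|X_p\|>0$. If $\sup_{p\in\msf{bd}(U)}\|Y_p-X_p\|<\delta$, then for all $s\in[0,1]$ and $p\in\msf{bd}(U)$ one has $\|(1-s)X_p+sY_p\|\ge\delta-\|Y_p-X_p\|>0$, so the straight-line family is a nonsingular homotopy on $\msf{bd}(U)$ from $X$ to $Y$; thus (i) implies (ii). Under (ii) the endpoint is nonzero on $\msf{bd}(U)$, whence $\Z Y\cap\msf{bd}(U)=\empty$ and $U$ is isolating for $Y$.

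For the index equality under (ii), I would realize the given nonsingular boundary homotopy $\{Z_s\}_{s\in[0,1]}$ by a homotopy $\{X_s\}_{s\in[0,1]}$ of inward vector fields on $\ov U$ with $X_0=X$, $X_1=Y$, and zero sets avoiding a fixed neighborhood of $\msf{bd}(U)$. Concretely, I would extend $\{Z_s\}$ to vector fields on a neighborhood $N$ of $\msf{bd}(U)$ that coincide with $X$ at $s=0$ and with $Y$ at $s=1$, and glue these, through a cutoff function, to the straight-line homotopy $(1-s)X+sY$ used on the rest of $\ov U$. Assuming $X$ and $Y$ locally Lipschitz---the case relevant to Hypothesis \ref{th:hyp2} and to all the applications---Proposition \ref{th:convex} and Corollary \ref{th:convexcor} make every $X_s$ inward, since each is a convex combination of inward fields. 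Because $X_s$ is bounded away from zero on $\msf{bd}(U)$ uniformly in $s$, continuity shows that $\bigcup_s(\Z{X_s}\cap\ov U)$ is a compact subset of the open set $U$. Joint continuity of $(s,t,p)\mapsto\Phi^{X_s}_t(p)$ together with the argument of Lemma \ref{th:tau} then furnishes a single $\tau>0$ for which every $\Phi^{X_s}_\tau$ is defined on $\ov U$, the set $\bigcup_s\Fix{\Phi^{X_s}_\tau|U}$ is compact in $U$, and $I(\Phi^{X_0}_\tau|U)=\msf i(X,U)$ while $I(\Phi^{X_1}_\tau|U)=\msf i(Y,U)$. Applying (FP5) to the homotopy $s\mapsto\Phi^{X_s}_\tau|U$ then yields $\msf i(X,U)=\msf i(Y,U)$.

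The main obstacle is keeping the interpolating family inward. Because $T^{\msf{in}}_pM$ need not be convex, I cannot form convex combinations of arbitrary inward fields, so near $\p M$ the extended fields $X_s$ must be built as convex combinations of fields already known to be inward, in order that Proposition \ref{th:convex} and Corollary \ref{th:convexcor} apply; this constrains the extension of $\{Z_s\}$ over $N$ to be assembled from $X$, $Y$ and the straight-line homotopy rather than chosen freely. A secondary and more routine point is the uniform choice of $\tau$ over the compact interval $[0,1]$, which follows from the uniform non-vanishing of $X_s$ on $\msf{bd}(U)$ and the joint continuity of the parametrized semiflow.
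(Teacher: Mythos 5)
Your reduction of (i) to (ii), and the observation that either hypothesis makes $U$ isolating for $Y$, are correct, and the overall shape of your argument for (ii) --- realize the boundary homotopy by a continuous family of inward fields and apply (FP5) to the time-$\tau$ maps for a single small $\tau$ --- is close to the paper's, which runs the one straight-line family $Z^t=(1-t)X+tY$ (inward by Proposition \ref{th:convex}) and obtains local constancy of $I(\Phi^{Z^t}_\tau|U)$ in $t$ from Lemma \ref{th:gnearf}. The problem is that the step you yourself flag as ``the main obstacle'' is a genuine gap, and the constraint you impose to get around it defeats the construction. If, near $\p M$, the interpolating fields $X_s$ must be assembled from nonnegative combinations of $X$, $Y$ and the straight-line family (so that Proposition \ref{th:convex} and Corollary \ref{th:convexcor} certify inwardness), then at a point $p\in\msf{bd}(U)\cap\p M$ you have $X_s(p)=a(s)X_p+b(s)Y_p$ with $a,b\ge 0$, $a(0)>0=b(0)$ and $a(1)=0<b(1)$. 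In the antipodal case $Y_p=-\lambda X_p$, $\lambda>0$ --- which is exactly the case in which hypothesis (ii) is invoked in the proof of Theorem \ref{th:main}, where the witnessing homotopy is a rotation rather than the straight line --- the intermediate value theorem gives an $s$ with $a(s)=\lambda b(s)$, so $X_s$ vanishes on $\msf{bd}(U)$, the set $\bigcup_s\Fix{\Phi^{X_s}_\tau|U}$ is not compact, and (FP5) does not apply. Thus your construction only proves the statement when $\msf{bd}(U)\cap\p M=\varnothing$ (where inwardness is automatic at every point of the collar and any extension of $\{Z_s\}$ works); in the presence of boundary it gives nothing beyond what the straight-line homotopy, i.e.\ essentially case (i), already gives.

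To close this you need a different device along $\msf{bd}(U)\cap\p M$: for instance, deform the given nonsingular homotopy, rel its endpoints, to one taking values in inward vectors there and then extend --- plausible when $\p M$ is a $C^1$ curve, since one can rotate from $X_p$ to $-X_p$ through the inward half-plane, but not addressed for the merely topological boundaries permitted by Hypothesis \ref{th:hypmain}, where $T^{\msf{in}}_pM$ need not even be convex --- or else run the fixed-point-index argument without requiring the interpolating fields to generate semiflows on $M$. A secondary point: (FP5) needs the fixed-point sets of the maps $\Phi^{X_s}_\tau|U$, not merely the zero sets of the $X_s$, to stay in a fixed compact subset of $U$; your parametrized version of Lemma \ref{th:tau} is the right tool, but the compactness argument must exclude periodic points of small period accumulating on $\msf{bd}(U)$, not only equilibria.
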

\begin{proof} Both (i) and (ii) imply $U$ is isolating for $Y$.  
 Consider the homotopy 
\[
 Z^t:= (1-t)X + tY,\qquad (0\le t\le 1).
\]
When  (i) holds each vector field
 $Z^t|\bd U$ is nonsingular, implying 
 (ii).  In addition, $Z^t$
is  inward by Proposition
\ref{th:convex}, and Lemma  \ref{th:tau} yields $\tau >0$ such that
\[  
 0<t\le \tau \implies 
   \msf i (X, U)=I(\Phi^X_t|U), \quad  \msf i (Y,
   U)=I(\Phi^Y_t|U). 
\] 
By 
Lemma \ref{th:gnearf}, each $t\in [0,1]$ has an open
neighborhood $J_t\subset[0,1]$ 
such that 
\[
  s\in J_t\implies I (\Phi^{X^s}_\tau|U) =
 I (\Phi^{X^t}_\tau|U).  
\]
Covering $[0,1]$ with sets $J_{t_1},\dots, J_{t_\nu}$ and 
inducting on $\nu\in \Np$  shows that  
\[
  I (\Phi^X_\tau|U) =  I (\Phi^Y_\tau|U),
\]
which by Definition  \ref{th:defindex} implies the conclusion.
\end{proof}

\paragraph{The index as an obstruction}
The following algebraic calculation of the index is included for
completeness, but not used.  Assume $M$ is oriented and $U$ is an
isolating neighborhood for a block $K\subset\Z X$.  Let $V\subset U$
be a compact smooth $n$-manifold with the orientation induced from
$M$, such that $K\subset V\verb=\= \p V$.  The primary obstruction to
extending $X|\p V$ to a nonsingular section of $TV$ is the {\em
  relative Euler class}
\[
  {\bf  e}_{(X, V)}\in H^n (V, \p V)
\]   
Let
\[{\bf v}\in H_n (V, \p V)
\]
 be the  homology class corresponding to
the induced orientation of $V$. %
Denote by 
\[
  H^n (V, \p V) \times H_n (V,\p V)\to\ZZ,\quad ({\bf c}, {\bf u}) \to 
\langle {\bf c}, {\bf u}\rangle,
\]
the  Kronecker Index pairing,  induced by evaluating cocycles on cycles. 
Unwinding definitions leads to:
\begin{proposition}             \mylabel{th:obs}
With  $M, X, K, V$ are as above, 
\[
 \msf i_K (X)= \langle {\bf v}, {\bf e}_{(X, V)} \rangle. 
\]
\end{proposition}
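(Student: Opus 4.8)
The plan is to show that both sides of the asserted identity equal the total Poincaré--Hopf index $\sum_p \msf i_p X$ of $X$, summed over the zeros $p\in\Z X\cap U$. The left side is handled by the machinery already in place; the right side is the classical obstruction-theoretic computation of the Euler class. First I would observe that, since $V$ is a smooth $n$-manifold with $K\subset V\setminus\partial V$, every point of $K$ has a Euclidean neighborhood in $M$, so in particular $K\cap\partial M=\varnothing$. Property (VF5) then applies and gives
\[
\msf i_K(X)=\msf i^{\mathrm{PH}}_K(X)=\sum_{p\in\Z X\cap U}\msf i_p X,
\]
so it remains only to identify the Kronecker pairing $\langle\mathbf v,\mathbf e_{(X,V)}\rangle$ with this same sum.

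Next I would reduce to a field with isolated zeros. By Definition \ref{th:defph}, the index $\msf i^{\mathrm{PH}}_K(X)$ may be computed using a $C^\infty$ field $Y$ with $\Z Y\cap\overline U$ finite, joined to $X$ by a homotopy of vector fields whose zeros stay in a compact subset of $U$. Choosing the homotopy so that its zeros remain in a small neighborhood of $K$ inside $V\setminus\partial V$ (possible since $\partial V$ is disjoint from $K$), the restriction to $\partial V$ stays nonsingular throughout; hence the primary obstruction is unchanged, $\mathbf e_{(Y,V)}=\mathbf e_{(X,V)}$, while $\sum_p\msf i_p Y=\sum_p\msf i_p X$. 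Thus I may assume $\Z X\cap V$ is a finite set of zeros sitting in the interiors of the top cells of a smooth CW (or triangulation) structure on $V$.

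The core step is the obstruction computation. Since the fibre $S^{n-1}$ of the bundle of nonzero tangent vectors is $(n-2)$-connected, the groups $H^k(V,\partial V;\pi_{k-1}(S^{n-1}))$ vanish for $k<n$, so $X|\partial V$ extends over the $(n-1)$-skeleton without obstruction and the sole obstruction to a global nonsingular extension is the class $\mathbf e_{(X,V)}\in H^n(V,\partial V)=H^n(V,\partial V;\ZZ)$, the coefficients being trivialized by the orientation of $M$. By definition of the primary obstruction, $\mathbf e_{(X,V)}$ is represented by the cocycle whose value on an oriented top cell $\sigma$ is the degree of the normalized Gauss map $z\mapsto \hat X(z)/\|\hat X(z)\|$ of $X$ restricted to $\partial\sigma$; by the local formula in Definition \ref{th:defph}, this degree equals $\sum\msf i_p X$ over the zeros $p\in\sigma$. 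Evaluating against the fundamental cycle $\mathbf v=\sum_\sigma\sigma$ then gives
\[
\langle\mathbf v,\mathbf e_{(X,V)}\rangle=\sum_\sigma(\text{degree on }\partial\sigma)=\sum_{p\in\Z X\cap V}\msf i_p X,
\]
which combined with the first display yields the assertion.

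The main obstacle is this last identification: aligning the sign and orientation conventions of the obstruction cocycle, built from the coboundary of the chosen section over the $(n-1)$-skeleton, with the degree convention that defines the local index $\msf i_p X$ in Definition \ref{th:defph}, so that the two match exactly rather than up to a global sign. Once the induced orientations on each $\partial\sigma$ and the standard identification $\pi_{n-1}(S^{n-1})\cong\ZZ$ are fixed consistently, the remainder is the classical fact that the relative Euler class, paired with the fundamental class of a compact oriented manifold with boundary, counts the indices of a vector field that is nonsingular on the boundary.
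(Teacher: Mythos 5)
Your argument is correct and is exactly the ``unwinding of definitions'' that the paper alludes to but does not write out (the paper offers no proof of Proposition \ref{th:obs}, only the remark that it follows from the definitions and is not used elsewhere). The two key observations --- that $K\subset V\setminus\partial V$ forces $K\cap\partial M=\varnothing$ so that (VF5) identifies $\msf i_K(X)$ with $\sum_p\msf i_pX$, and that the primary obstruction cocycle evaluated on a top cell is the boundary degree of the normalized field, hence the sum of local indices in that cell --- are precisely the intended content, so your proof matches the paper's (implicit) approach.
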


When $M$ is nonorientable the same formula holds provided the
coefficients for $H^n (V, \p V)$ and $ H_n (V,\p V)$ are twisted by
the orientation sheaf of $V$.

\subsection{Stability of essential blocks }   \mylabel{sec:stability}
An immediate consequence of Propositions \ref{th:homsections}(i) and property
(VF2) of \ref{th:index} is: 
\begin{corollary}               \mylabel{th:ess}
If a block $K$ is essential for $X$, and $Y\in \V_{\msf{in}} (M)$ is
sufficiently close to $X$, then every neighborhood of $K$ contains an
essential block for $Y$.  \qed
\end{corollary}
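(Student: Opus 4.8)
The plan is to reduce everything to the two cited facts by first choosing an isolating neighborhood of $K$ that already sits inside the prescribed neighborhood, and then transporting nonvanishing of the index from $X$ to $Y$. The corollary is billed as immediate, so the real content lies in the bookkeeping of the shrinking step.

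First I would fix the given neighborhood $N$ of $K$ together with an isolating neighborhood $U_0$ for $(X,K)$, so that $\Z X \cap U_0 = K$ and $\bd{U_0}$ contains no zeros of $X$. Since $K$ is compact and lies in the open set $U_0 \cap N$, I can choose an open set $U$ with $K \subset U$, with $\ov U$ compact, and $\ov U \subset U_0 \cap N$. Then $\bd{U}\subset U_0$, so $\bd{U}\cap \Z X \subset (U_0\cap \Z X)\setminus U = K\setminus U = \varnothing$; hence $U$ is again isolating for $(X,K)$ and $\Z X \cap U = K$. By the definition of the vector field index at a block (Definition \ref{th:defindex}, which is independent of the choice of isolating neighborhood), $\msf i(X,U)=\msf i_K(X)\ne 0$.

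Next I would invoke Proposition \ref{th:homsections}(i). Convergence $Y\to X$ in the compact-open topology of $\V_{\msf{in}}(M)$ forces $Y|\bd{U}\to X|\bd{U}$ on the compact set $\bd{U}$, so for $Y$ sufficiently close to $X$ the hypothesis of \ref{th:homsections}(i) is met. That proposition then yields that $U$ is isolating for $Y$ and $\msf i(Y,U)=\msf i(X,U)\ne 0$. Property (VF2) of Proposition \ref{th:index} now gives $\Z Y\cap U\ne\varnothing$. Setting $K':=\Z Y\cap U$, I note that $U$ isolating for $Y$ means $\bd{U}$ carries no zeros of $Y$, so $K'=\Z Y\cap\ov U$ is compact, and being $\Z Y\cap U$ with $U$ open it is relatively open in $\Z Y$; thus $K'$ is a block for $Y$. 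Its index is $\msf i_{K'}(Y)=\msf i(Y,U)\ne 0$, so $K'$ is essential, and $K'\subset U\subset N$, which is exactly the assertion.

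The work is almost entirely bookkeeping, since the two quoted results do the lifting. The one point requiring care — and the step I would treat as the main obstacle — is the initial shrinking: one must produce an isolating neighborhood whose closure lies in $N$ without creating zeros of $X$ on its boundary, and confirm that the index is unchanged. This rests only on compactness of $K$ and neighborhood-independence of $\msf i_K$, but it is precisely where the geometric content of \emph{every} neighborhood containing an essential block actually enters.
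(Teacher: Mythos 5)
Your proof is correct and follows exactly the route the paper intends: the paper states the corollary as an immediate consequence of Proposition \ref{th:homsections}(i) and property (VF2) of Proposition \ref{th:index}, and your argument simply supplies the (routine but worth recording) shrinking step that places an isolating neighborhood inside the prescribed neighborhood before applying those two results. Nothing is missing.
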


Thus essential blocks are stable under perturbations of the vector
field.   It is easy to see that a block is stable if it contains a
stable block.  For example,  the block
$\{-1, 1\}$ for $X= (x^2-1)\pd x$ on $\RR$  is stable,  but
inessential.   But the following result 
(not used) means that a block can be perturbed away  iff every subblock
is inessential:
\begin{proposition}             \mylabel{th:iness}
Assume $\p M$ is a smooth submanifold of $\tilde M$ and every block
for $X$ in $U$ is inessential.  Then
$ X=\lim_{k\to\infty}X^k $ where 
\[
 X=\lim_{k\to\infty}X^k, \quad X^k\in \V^{\msf L}_{\msf{in}} (M), \quad 
  \Z{X^k}\cap  U =\empty, \qquad (k\in \Np) 
\]
and $X^k$ coincides with $X$ outside $U$. 
\end{proposition}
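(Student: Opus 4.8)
The plan is to remove the zeros of $X$ inside $U$ by a $C^0$-small surgery supported in thin tubes around $\Z X\cap U$, exploiting that the vanishing of every subblock's index is precisely the obstruction-theoretic condition needed to fill in a nonsingular field. First I would enclose the compact set $\Z X\cap U$ in a finite disjoint union of small compact smooth $n$-submanifolds (tubes) $W_1,\dots,W_m\subset U$ with $\partial W_j\cap\Z X=\varnothing$, so that each $K_j:=\Z X\cap\Int W_j$ is a block; by hypothesis every block is inessential, so $\msf i(X,\Int W_j)=0$ for each $j$. Shrinking the tubes makes $\sup_{W_j}\|\tilde X\|$ as small as desired, which will eventually drive the convergence $X^k\to X$. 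Note the strong hypothesis is genuinely used: only because \emph{each} $K_j$ (and not merely their union) has index zero can we fill in the tubes separately, and this is exactly what fails for $X=(x^2-1)\,\p/\p x$.

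The main obstacle is that a block may meet $\p M$, where the perturbed field must remain inward. I would dispose of the boundary by first pushing the zeros inward. Fix a collar of $\p M$ and a field $\xi$ that is strictly inward along $\p M$, and replace $X$ by $X_s:=X+s\psi\xi$, where $\psi\ge 0$ is supported in $\bigcup_j W_j$ and is positive on a neighborhood of $\Z X\cap\p M$. By Proposition \ref{th:convex} each $X_s$ is inward; by construction $X_s$ has no zeros on $\p M\cap U$ and coincides with $X$ outside $U$; and by property (VF4) of Proposition \ref{th:index}, $\msf i(X_s,\Int W_j)=\msf i(X,\Int W_j)=0$ once $s$ is small (the boundary $\p W_j$ being free of zeros of $X_s$ for small $s$). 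For such $s$ all zeros of $X_s$ in $W_j$ form a cluster $C_j\subset\Int M$, which I would enclose in a connected tube $V_j'\subset W_j\cap\Int M$. Since $\Int V_j'$ and $\Int W_j$ isolate the same block $C_j$ for the same field, property (VF1) gives $\msf i(X_s,\Int V_j')=0$.

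The problem is now interior, and I would invoke Proposition \ref{th:obs}. Because $V_j'$ lies in $\Int M$, property (VF5) identifies the index with $\langle\mbf v,\mbf e_{(X_s,V_j')}\rangle$, and since the Kronecker pairing with the fundamental class is an isomorphism on the connected oriented (or twisted-oriented) $V_j'$, vanishing of the index forces the relative Euler class $\mbf e_{(X_s,V_j')}$ to vanish. As $S^{n-1}$ is $(n-2)$-connected, this primary obstruction is the only one, so $X_s|\p V_j'$ extends to a nonsingular section $X_j'$ of $TV_j'$; rescaling the radial factor of the extension keeps $\sup_{V_j'}\|X_j'\|\le C\sup_{\p V_j'}\|X_s\|$. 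Gluing $X_s$ (off the tubes) to the fields $X_j'$ (inside the $V_j'$) yields a locally Lipschitz field that equals $X$ outside $U$, has no zeros in $U$, and is inward: at interior points every vector is inward, while on $\p M$ the glued field agrees with the strictly inward $X_s$ (the tubes being interior).

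Finally, letting $s\to 0$ and the tube radii $\to 0$, with $\|X_s-X\|\le sM_0$ and $\sup_{V_j'}\|X_j'\|\le C\bigl(\omega_X(\rho)+sM_0\bigr)\to 0$, the glued fields converge to $X$ in the compact-open topology on $\ov U$, and equal $X$ elsewhere; this gives the sequence $X^k\in\V^{\msf L}_{\msf{in}}(M)$ with $\Z{X^k}\cap U=\varnothing$ and $X^k\to X$. The delicate points, and where I expect the real work to lie, are verifying that the inward push creates no new boundary zeros while preserving each tube index, and that the norm estimates survive the gluing, so that convergence and the strict requirement of coinciding with $X$ outside $U$ hold simultaneously.
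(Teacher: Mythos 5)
Your argument is sound and reaches the conclusion, but it executes the two key steps differently from the paper. Both proofs begin identically: enclose $\Z X\cap U$ in finitely many small tubes on which $\|X\|<\eps$, observe that the hypothesis forces each tube's index to vanish, and let the norm bound drive the convergence $X^k\to X$. The paper then uses smoothness of $\p M$ to replace $X$ by a Lipschitz inward approximation $Y^\eps$ whose zero set in $\ov U$ is \emph{finite}, and removes those zeros by pairwise cancellation (citing the deformation technique of \cite[Th.\ 5.2.10]{Hirsch76}); you instead keep the possibly large zero set, push it off $\p M$ with the perturbation $X+s\psi\xi$ (which works because an inward vector can never equal the strictly outward vector $-s\psi\xi_p$, and Proposition \ref{th:convex} keeps the sum inward), and then kill the zeros in one stroke via Proposition \ref{th:obs}: index zero forces the relative Euler class to vanish, and since $\S{n-1}$ is $(n-2)$-connected this primary obstruction is the only one. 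Your route buys independence from any genericity/finiteness step and actually puts to work the obstruction-theoretic formula that the paper includes ``for completeness, but not used''; the paper's route avoids obstruction theory entirely and leans on a standard cancellation lemma. Two small points you should make explicit: the nonsingular extension supplied by obstruction theory is a priori only continuous, so it must be smoothed (nonsingularity being an open condition) before the glued field is locally Lipschitz; and if a tube $V_j'$ is disconnected the Kronecker pairing argument should be run component by component, which is harmless since the hypothesis makes \emph{every} block in $U$ inessential, not just the union. Neither point is a gap, and your boundary analysis (no new zeros on $\p M\cap U$, index preserved by (VF4)) is correct as written.
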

\begin{proof}
Fix a Riemannian metric on $M$.  For every $\eps >0$ choose an
isolating neighborhood $W:=W(\eps)\subset U$ of $K$ having only
finitely many components, and such that
\[
\|X_p\|<\eps, \qquad (p\in W). 
\]
Thus $X (W)$ lies in the bundle $T^\eps W$ whose fibre over $p$ 
is the open disk of radius $\eps$ in $T_p W$.  
Smoothness of $\p M$ enables an approximation  $Y^\eps\in \V^{\msf
  L}_{\msf{in}}$ to $X$ such that $Y^\eps
(W)\subset  T^\eps W$ and $\Z {Y^\eps}\cap \ov U$ is finite.  
By Proposition \ref{th:homsections}(ii) and the hypothesisw we choose
the approximation 
close enough so that for each
component $W_j$ of $W$:
\[
\msf i (Y^\eps, W_j) =0.
\]
Standard deformation techniques (compare Hirsch \cite[Th. 5.2.10]{Hirsch76})
permit pairwise cancellation in each $W_j$ of the zeros of $Y^\eps$,
without changing $Y^\eps$ near $\msf{bd} (W_j)$.  This yields a vector
field $X^{\eps}\in \V^{\msf L}_{\msf{in}} (M)$ coinciding with $X$ in a
neighborhood of $M\verb=\= W$ and nonsingular in $W$, and such that
\[
\|X^\eps_p - X_p\| < 2\eps, \qquad (p\in M)
 \]
The sequence $\{X^{1/k}\}_{k\in\Np}$ has the required properties. 
\end{proof}

Plante \cite{Plante91} discusses  index functions  for abelian
Lie  algebras of vector fields on closed surfaces. 

\section{Proofs of the main theorems }   \mylabel{sec:mainproofs}

\subsection{Proof of Theorem \ref{th:main}}   \mylabel{sec:proofs}
We recall the hypothesis:

\begin{itemize} 

\item $\tilde M$ is an analytic surface with empty boundary,
  $M\subset \tilde M$ is a connected topological surface embedded in
  $\tilde M$.

\item $X$ and $Y$ are inward $C^1$ vector fields on $M$.

\item   $[X, Y]\wedge X=0$.  

\item $K$ is an essential block of zeros for $X$.    

\end{itemize}
\begin{definition}              \mylabel{th:defdepend}
Let $A, B$ be vector fields on a set $S\subset \tilde M$. 
The {\em dependency set} of $A$ and $B$ is 
\[
  \msf D (A, B):=\{p\in S\co A_p\wedge B_p=0\}
\]
\end{definition}
Evidently
\[
  \msf D (X, Y)=   \msf D (\tilde X, \tilde Y)\cap M.
\]
Proposition
\ref{th:wedge} implies $ \msf D (\tilde X, \tilde Y)$ is $\tilde
Y$-invariant and $\msf D (X, Y)$ is positively $Y$-invariant. 

\medskip
{\em Case (a):     $X$ and $Y$ are analytic.}   Then
$\msf D (\tilde X, \tilde Y)$ and its subset $\Z{\tilde X}$ are
analytic sets in $\tilde M$, hence $\tilde M$ is a simplicial
complex with  subcomplexes $\msf D (\tilde X, \tilde Y)$ and $\Z{\tilde
X}$ by S.\ {\L}ojasiewicz's  triangulation theorem \cite{Lo64}.

We assume 
\begin{itemize}
\item $\dim \Z {\tilde Y} < 2$, 
\end{itemize}
as otherwise $Y=0$ and the conclusion is trivial.  We also assume
\begin{itemize}
\item {\em every component of $K$ has dimension $1$}
\end{itemize}  
because isolated points of $K$ lie in $\Z Y$ and $K=M$ by analyticity
if some component of $K$ is $2$-dimensional, and either of these
conditions imply the conclusion.  Thus $\Psi^{\tilde Y}$ restricts to
a semiflow on the $1$-dimensional complex $ D (\tilde X, \tilde Y)$
having $\Z X$ and $\msf D (X, Y)$ as positively invariant
subcomplexes.

Let $J\subset K$ be any component.  $J$ is a compact, connected,
triangulable space of dimension $\le 1$ which is positively
$Y$-invariant.  From the topology of $J$ we see that $\Z Y$ meets $J$
and therefore $K$, unless
\begin{equation}                \label{eq:jordan}
\mbox {\em $J$ is a Jordan curve on which $\Phi^Y$ acts
  transitively.} 
\end{equation}
Henceforth (\ref{eq:jordan}) is assumed.   

Let $L\subset \msf D (X, Y)$ 
be the component containing $J$.  The set $Q:=J\cap \ov{L\setminus J}$ is
positively $Y$-invariant, whence  (\ref{eq:jordan}) implies
$Q=J$ or $Q=\empty$.  Therefore one of the following holds:

\begin{description}

\item[(D1)] $J\subset \Int_M\msf D (X, Y)$, 

\item[(D2)] $J$ is a component of  $\msf D (X, Y)$. 

\end{description}
 
Assume (D1) and suppose {\em per contra} that $\Z Y\cap K
=\varnothing$.  Then $ \msf D (X, Y)$ contains the compact closure of
an open set $U$ that is isolating for $(X,K)$.  We choose $U$ so that                 
each component $C$ of the topological boundary $\msf{bd}(U)$ is a
Jordan curve or a compact arc.  It suffices by Proposition
\ref{th:homsections}(ii) to prove for each $C$:
\begin{equation}                \label{eq:scyc}
\mbox{\em  the vector fields $X|C$ and $Y|C$ are nonsingularly homotopic.}
\end{equation}
Since this holds when $C$ is an arc, we assume $C$ is a Jordan curve.   Fix
 a Riemannian metric on $M$ and define
\[
 \hat X_p=\frac{1}{\|X_p\|}X_p, \quad \hat Y_p=\frac{1}{\|Y_p\|}Y_p,
 \qquad (p\in C).
\]
These  unit vector fields  are nonsingularly homotopic to $X|C$ and
$Y|C$ respectively, and the assumption $C\subset \msf D (X, Y)$
implies $\hat X=\hat Y$ or $\hat X= -\hat Y$.  In the first case there
is nothing more to prove.  In the second case $\hat X$ and $\hat Y$ are
antipodal sections of the unit circle bundle $\eta$ associated to
$T_CM$.  As the identity and antipodal maps of the circle are
homotopic through rotations, (\ref{eq:scyc}) is proved.

Now assume (D2). There is  an isolating neighborhood $U$ for $X$ such that 
\begin{equation}                \label{eq:UD}
U\cap \msf D (X, Y) = K.
\end{equation}
If  $0<\eps <1$ the field $X^\eps:= (1-\eps X)+\eps
Y$  belongs to $\V^{\msf L}_{\msf{in}}(M)$ (Proposition \ref{th:convex}), and
has a zero $p\in U$ provided $\eps$ is sufficiently small 
(Proposition \ref{th:ess}).  In that case $X_p$ and $Y_p$ are linearly
dependent, therefore  $p\in K$ by (\ref{eq:UD}), whence  $Y_p=0$.

\medskip

{\em Case (b):  Every neighborhood of $K$ contains an open neighborhood $W$
  whose boundary consist of finitely many  $Y$-cycles.}  \
It suffices to prove that $\Z Y\cap \ov W \ne\varnothing$ if  $W$ is
isolating for $(X, K)$.  Given such a $W$, let  $C$ be a component of
$\msf{bd}(W)$.  By Proposition \ref{th:wedge}(a), $X_p$ and $Y_p$ are
linearly dependent at all points of $C$, or at no point of $C$.  In
the first case $X|C$ and $Y|C$ are nonsingularly homotopic, as in the
proof of (\ref{eq:scyc}).  In the second case they are nonsingularly
homotopic by the restriction to $C$ of the path of vector fields
$(1-t)X+tY$, $0\le t\le 1$.  It follows 
that 
$X|\msf{bd}(W)$ and $Y|\msf{bd}(W)$ are nonsingularly homotopic.  Now  
Proposition \ref{th:homsections}(ii) implies 
\[
 \msf i(Y, W)=\msf i (X, W),
\]
which is nonzero because $K$ is essential for $X$.  Hence 
either $\Z Y$ meets $\msf {bd} (W)$, or $W$ is isolating for $(Y, K)$ and
$\Z Y\cap W\ne\varnothing$.  This shows that $\Z Y$ meets $\ov W$. \qed

\begin{remark}          \mylabel{th:remlima}
It is interesting to see where the proof Theorem \ref{th:main} breaks
down in Lima's counterexample to a nonanalytic version (see Example
\ref{th:exlima}).  Lima starts from the planar vector fields
\[
X^1:=  \pd x,\qquad Y^1:=x \pd  x +  y \pd y 
\]
satisfying $[X^1, Y^1]= X^1$ and transfers them to the open disk by an
analytic diffeomormophism $f\co \R 2 \approx \Int \D 2$.  This is done
in such a way that the push-forwards of $X^1$ and $Y^1$ extend to
continuous vector fields $X, Y$ on $M:=\D 2$ satisfying $[X, Y]= Y$,
with $\Z X=K= \p \D 2$ and $\msf i_K (X) =1$, while $\Z Y$ is a
singleton in the interior of $\D 2$.  This can be done so that $X$ and
$Y$ are $C^\infty$ (see \cite{BL96}) and therefore generate unique
local semiflows.  The dependency set $\msf D (X, Y)$ is $R\cup\p \D
2$, where $R$ is the $\Phi^X$-orbit of $z$, a topological line in
$\Int\, \D 2$ that spirals toward the boundary in both direction.
$\msf D (X, Y)$ is not triangulable because it is connected but not
path connected.  It is easily seen that neither (D1) nor (D2)
holds.
\end{remark} 

\subsection{Proof of Theorem \ref{th:mainB}}   \mylabel{sec:proofmainB}
Here $K$ is essential for $X$ and $\Phi^Y$
preserves area.  
Suppose {\em per contra}  
\begin{equation}                \label{eq:zyk}
\Z Y\cap K=\empty. 
\end{equation}
We can assume $K$ contains a $Y$-cycle $\gam$, for (\ref{eq:zyk})
implies every minimal set for $\Phi^Y$ in $K$ is a cycle: This follows
from the Poincar\'e-Bendixson theorem (Hartman \cite{Hartman64}) when
$K$ has a planar neighborhood, and from the Schwartz-Sacksteder
Theorem \cite{Sacksteder65,Schwartz63} when $Y$ is $C^2$.

Let $J\subset M$ be a half-open arc with endpoint $p\in \gam$ and
otherwise topologically transverse to $Y$ orbits (Whitney
\cite{Whitney33}).  For any sufficiently small half-open subarc
$J_0\subset J$ with endpoint $p$, there is a first-return Poincar\'e
map $f\co J_0\hookrightarrow J$ obtained by following trajectories.
By the area-preserving hypothesis and Fubini's Theorem, $f$ is the
identity map of $J_0$.  Therefore $\gam$ has a neighborhood $U\subset
M$, homeomorphic to a cylinder or a M\"obius band, filled with
$Y$-cycles.  Theorem
\ref{th:main}(b) implies  $K$ is inessential for
$X$, contradicting to the hypothesis.  \qed

\subsection{Proof of Theorem \ref{th:mainC}}  
$K$ is an essential block for $X\in\V^\om_{\msf{in}}(M)$ and
$\p M$ is an analytic set in $\tilde M$.  We can assume $\msf W
(X)\ne\varnothing$ (see Definition  \ref{th:defW}).   Our goal is
to prove   
\[
 \Z{\msf W
  (X)}\cap K\ne\varnothing.
\]
The main step is to show that the set 
\[
  \mcal P (K):=\{\ttt\subset \msf W  (X)\co \Z\ttt \cap
  K\ne\varnothing\}
\] 
is inductively ordered by inclusion. 
Note that $\mcal P (K)$ is nonempty because it contains the singleton
$\{Y\}$.  In fact Theorem \ref{th:main} states:
\begin{equation}                \label{eq:yddx}
Y\in \msf W (X)\implies  \{Y\}\in  \mcal P (K).
\end{equation}

We rely on a consequence of Proposition \ref{th:wedge}:
\begin{equation}                \label{eq:kddx}
\mbox{\em $K$ is positively invariant under $\msf W (X)$}
\end{equation}

The assumption on $\p M$ implies $M$ is  semianalytic as a subset of 
$\tilde M$, and this implies $K$ is also semianalytic.  Therefore $K$,
being compact, has only finitely many components and
one of them is essential for $X$ by Proposition \ref{th:index}, (VF3).
Therefore we can assume
$K$ is a connected semianalytic set of $\tilde M$.
Now $K$ is the intersection of $M$ with the component of $\Z \tilde X$
that contains $K$, which is an analytic set.  This implies $\dim K \le
1$, and we assume $\dim K=1$, as otherwise $K$ is finite and contained
in $\Z{\msf W (X)}$ by (\ref{eq:kddx}).

The set $K_{sing}\subset K$ where $K$ is not locally an
analytic $1$-manifold is finite and  positively invariant under  $\msf W
(X)$   As
this implies  $K_{sing}\subset\Z {\msf W (X)}\cap K$, 
we can assume $K_{sing}=\empty$, which under current assumptions means:
\[
  \mbox{\em  $K$ is an analytic submanifold of $\tilde M$
  diffeomorphic to a circle.}  
\]
We can also assume: 
\begin{description}
\item[(K)]
{\em $K\subset L$ if $K\cap L\ne\varnothing$ and $L$ is positively
  $\msf W (X)$-invariant 
semianalytic set in $\tilde M$.}
\end{description}
For if  $Y\in \msf W (S)$ then
$K\cap L$, being  nonempty,  finite   and positively invariant under
every $Y$, is 
necessarily contained in $\Z Y$. 

From (K) we infer 
 \begin{equation}                \label{eq:hhK}
\ttt\in \mcal P (K) \implies K\subset \Z\ttt.
  \end{equation}
Consequently  $\mcal P (K)$ is inductively ordered by inclusion.   By 
Zorn's lemma there is 
a maximal element $\mm\in\mcal P (K)$, and (\ref{eq:hhK}) implies
 \begin{equation}                \label{eq:kmm}
 K\subset \Z \mm.
 \end{equation}
To  prove  every  $Y\in \gg$ lies in $\mm$, let $\nn_Y\subset\gg$ be
the smallest ideal 
containing $Y$ and $\mm$.  Theorem \ref{th:main} implies $\Z Y\cap
K\ne\varnothing$, whence
\[
 \Z {\nn_Y}\cap K=  \Z Y\cap \Z \mm \cap K \ne\varnothing
\]
 by (\ref{eq:kmm}).  Property (K) shows that $\nn_Y\in \mcal
 H$, so $\nn_Y=\mm$  by  maximality of $\mm$.  \qed

\subsection{Proof of Theorem \ref{th:liealg} }
The theorem states:

\smallskip
\noindent
{\em Let $M$ be an analytic surface and $\gg$ a  Lie
  algebra  of analytic vector fields on $M$ that are 
tangent to $\p M$.  If $X\in \gg$ spans a one-dimensional ideal,
then:}

\begin{description}

\item[(a)] {\em $\Z \gg$ meets every
essential block $K$ of zeros for $X$,}

\item[(b)] {\em  if $M$ is compact and $\chi (M)\ne 0$ then
  $\Z{\gg}\ne\varnothing$.} 

\end{description}

The hypotheses imply $\gg\subset \msf W (X)$, because if $Y\in \gg$ then
$[X, Y]=cX, c\in \RR$.  Therefore (a) follows from Theorem
\ref{th:mainC}.  Conclusion (b) is a consequence, because its
assumptions imply the block $\Z X$ is essential for $X$ (Proposition
\ref{th:ichi2}).  \qed

\subsection{Proof  of Theorem \ref{th:liegroup}}  
An effective analytic action $\alpha$ of $G$ on $M$ induces an isomorphism
$\phi$ mapping the Lie algebra $\gg_0$ of $G$ isomorphically onto a
subalgebra $\gg\subset \V^\om (M)$.   Let $X^0\in\gg_0$ span the Lie
algebra of a one-dimensional normal subgroup of $G$.  Then $\phi
(X^0)$ spans a $1$-dimensional ideal in $\gg$, hence Theorem
\ref{th:liealg} implies $\Z \gg\ne\varnothing$.  The conclusion
follows because $\Z \gg=\Fix {\alpha(G)}$. \qed

\end{document}